\numberwithin{equation}{section}
\theoremstyle{plain}
\newtheorem{theorem}{Theorem}[section]
\newtheorem{lemma}[theorem]{Lemma}
\theoremstyle{remark}
\newtheorem{remark}{Remark}[section]
\newtheorem{corollary}[theorem]{Corollary}
\renewcommand{\(}{$\,}
\renewcommand{\)}{\,$}
\def\nquad{\hspace{-1cm}}
\def\eqdef{\stackrel{\operatorname{def}}{=}}
\newcommand{\cc}[1]{\mathscr{#1}}
\newcommand{\bb}[1]{\boldsymbol{#1}}
\renewcommand{\bar}[1]{\overline{#1}}
\renewcommand{\tilde}[1]{\widetilde{#1}}
\renewcommand{\Gamma}{\varGamma}
\renewcommand{\Pi}{\varPi}
\renewcommand{\Sigma}{\varSigma}
\renewcommand{\Delta}{\varDelta}
\renewcommand{\Lambda}{\varLambda}
\renewcommand{\Psi}{\varPsi}
\renewcommand{\Phi}{\varPhi}
\renewcommand{\Theta}{\varTheta}
\renewcommand{\Omega}{\varOmega}
\renewcommand{\Xi}{\varXi}
\renewcommand{\Upsilon}{\varUpsilon}
\def\argmax{\operatornamewithlimits{argmax}}
\def\av{\bb{a}}
\def\fv{\bb{f}}
\def\Yv{\bb{Y}}
\def\betav{\bb{\beta}}
\def\epsv{\bb{\varepsilon}}
\def\etav{\bb{\eta}}
\def\lambdav{\bb{\lambda}}
\def\muv{\bb{\mu}}
\def\thetav{\bb{\theta}}
\def\xiv{\bb{\xi}}
\def\Psiv{\bb{\Psi}}
\definecolor{blue(pigment)}{rgb}{0.2, 0.2, 0.6}
\definecolor{ultramarine}{rgb}{0.07, 0.04, 0.56}
\definecolor{darkspringgreen}{rgb}{0.09, 0.45, 0.27}
\definecolor{hookersgreen}{rgb}{0.0, 0.44, 0.0}
\definecolor{plum(traditional)}{rgb}{0.56, 0.27, 0.52}
\definecolor{purple(html/css)}{rgb}{0.5, 0.0, 0.5}
\definecolor{magenta(dye)}{rgb}{0.79, 0.08, 0.48}
\def\D{\bb{D}}
\def\R{I\!\!R}
\def\E{I\!\!E}
\def\P{I\!\!P}
\def\C{\mathbb{C}}
\def\kappa{\varkappa}
\def\diag{\operatorname{diag}}
\def\Exp{\operatorname{Exp}}
\def\Fr{\operatorname{Fr}}
\def\ND{\mathcal{N}}
\def\Var{\operatorname{Var}}
\def\T{\top}
\def\tr{\operatorname{tr}}
\def\diag{\operatorname{diag}}
\def\TV{\operatorname{TV}}
\def\CONST{\mathtt{C} \hspace{0.1em}}
\def\cond{\, \big| \,}
\def\nsize{{n}}
\def\sumi{\sum_{i=1}^{\nsize}}
\def\Id{I\!\!\!I}
\def\alp{\alpha}
\def\BB{I\!\!B}     
\def\BB{B}
\def\CS{\cc{E}}
\def\DP{D}
\def\DPGP{\DP_{\GP}}
\def\dimp{p}
\def\eps{\epsilon}			
\def\eps{\varepsilon}
\def\fvs{\fv}
\def\fvs{\fv^{*}}
\def\GP{G}
\def\LT{L}
\def\LGP{\LT_{\GP}}
\def\Lb{L^{\sbt}}
\def\lambdav{\bb{\lambda}}
\def\Pb{\P^{\sbt}}
\def\prior{\pi}		
\def\prior{\Pi}
\def\QL{W}
\def\qq{q}
\def\rr{\mathtt{r}}
\def\Sigmab{\Sigma^{\sbt}}
\def\sbt{\hspace{1pt} \flat}
\def\thetav{\bb{\theta}}
\def\thetavs{\thetav^{*}}
\def\thetavc{\thetav'}
\def\thetavb{\breve{\thetav}}
\def\vtheta{\vartheta}
\def\vthetav{\bb{\vtheta}}
\def\wb{w^{\sbt}}		
\def\WF{\bb{A}}
\def\xivb{\xiv^{\sbt}}
\def\ZZ{\mathtt{z}}		
\def\ZZ{\mathfrak{z}}
\def\ZZbt{\ZZ^{\sbt}}
\def \HM{\mathbb{H}}
\def \A{\mathbf{A}}
\def \D{\bb{D}}
\def \BB{\mathbf{B}}
\def \V{\mathbf{V}}
\def \G{\mathbf{G}}
\def \OP{\mathbf{P}}
\def \ee{\mathbf{e}}
\def\Sigmafr{\Sigma}
\def\CONSTdlt{\kappa}
\def\xivfr{\xiv}
\def\Zbar{\bar{Z}}
\def\tsqu{t}
\def\Inta{H}
\def\avc{\bar{\av}}
\def\ac{\bar{a}}
\def\Frobg{\Lambda}
\def\Cclass{\mathbf{C}}
\newcommand\myeq{\mathrel{\overset{\makebox[0pt]{\mbox{\normalfont\tiny\sffamily d}}}{=}}}
\begin{document}
\begin{frontmatter}

\title{Large ball probabilities, Gaussian comparison and anti-concentration}
\runtitle{Large ball probability}

\begin{aug}
	\author{\fnms{Friedrich} 
		\snm{G{\"o}tze}
		\thanksref{a}
		\ead[label=e1]{goetze@math.uni-bielefeld.de}},
	\author{\fnms{Alexey} 
		\snm{Naumov}
		\thanksref{b,d,e}
		\ead[label=e2]{anaumov@hse.ru, vulyanov@hse.ru}},
	\author{\fnms{Vladimir Spokoiny}
		\thanksref{b,d,e,f}
		\ead[label=e3]{spokoiny@wias-berlin.de}}
	\and
	\author{\fnms{Vladimir} 
		\snm{Ulyanov}
		\thanksref{b,c}%
		\ead[label=e4]{vulyanov@cs.msu.ru}}.
	
	\address[a]{Faculty of Mathematics, Bielefeld University, P. O. Box 10 01 31, 33501, Bielefeld, Germany.
	\printead{e1}}
	
	\address[b]{National Research University Higher School of Economics, 20 Myasnitskaya ulitsa, 101000, Moscow, Russia
	\printead{e2}}
	
	\address[c]{Moscow State University, 
		Leninskie Gory, 1, 
		Moscow, Russia
	\printead{e4}}
	\runauthor{F. G{\"o}tze et al.}
	
	\address[d]{Skolkovo Institute of Science and Technology (Skoltech), Skolkovo Innovation Center, Building 3, 143026, Moscow , Russia}
	
	\address[e]{Institute for Information Transmission Problems RAS, Bolshoy Karetny per. 19, bld.1, 127051, Moscow, Russia.}
	
	\address[f]{Weierstrass Institute, Mohrenstr. 39, 10117 Berlin, Germany
	\printead{e3}}
	
	
\end{aug}

\begin{abstract}
We derive tight non-asymptotic bounds for the Kolmogorov distance between the probabilities of two Gaussian elements to hit a ball in a Hilbert space. 
The key property of these bounds is that they are dimension-free and depend on the nuclear (Schatten-one) norm of the difference
between the covariance operators of the elements and on the norm of the mean shift. 
The obtained bounds significantly improve the bound based on Pinsker's inequality via the Kullback-Leibler divergence.
We also establish an anti-concentration bound for a squared norm of a non-centered Gaussian element in Hilbert space. 
The paper presents a number of examples motivating our results and  applications of the obtained bounds to statistical inference and to high-dimensional CLT. 
\end{abstract}

\begin{keyword}
\kwd{Gaussian comparison, Gaussian anti-concentration inequalities, effective rank,\\ dimension free bounds, Schatten norm, high-dimensional inference}
\end{keyword}

\end{frontmatter}

\section{Introduction}

In many statistical and probabilistic applications one faces the problem 
to evaluate how the probability of a ball under a Gaussian measure is affected, 
if the mean and the covariance operators of this Gaussian measure are 
slightly changed. 
Below we present particular examples motivating our results when such 
``large ball probability'' problem naturally arises,
including bootstrap validation, Bayesian inference, high-dimensional CLT.
This paper presents sharp bounds for the Kolmogorov distance between the probabilities of two Gaussian elements to hit a ball in a Hilbert space. 
The key property of these bounds is that they are dimension-free and depend on the nuclear (Schatten-one) norm of the difference between the covariance operators of the elements. 
We  also state a tight dimension free anti-concentration bound for a squared norm of a Gaussian element in Hilbert space which refines the well known results on the density
of a chi-squared distribution; see Theorem~\ref{band of GE}. 

Section~\ref{Sapplexamples} presents some 
application examples where the ``large ball probability'' issue naturally 
arises and explains how the new bounds of this paper can be used 
to improve the existing results.
The key observation behind the improvement is that in all mentioned examples 
we only need to know the properties of Gaussian measures on a class of balls. It means, in particular, that we would like to compare two Gaussian measures on the class of balls instead on  the class of all measurable sets. 
The latter can be upperbounded by general Pinsker's inequality via the Kullback--Leibler divergence. In case of Gaussian measures this divergence can be expressed explicitly in terms of parameters of the underlying measures, see e.g. 
\cite{SpZh2014}. 
However, the obtained bound involves the inverse of the covariance operators
of the considered Gaussian measures.
In particularly, small eigenvalues have the largest impact which is 
contra-intuitive if a probability of a ball is considered.
Our bounds only involve the operator and Frobenius norms of the related 
covariance operators and apply even in Hilbert space setup.

The proofs of the present optimal results are based in particular on Theorem~\ref{l: density est 2} below. 
This theorem gives sharp upper bounds for a probability density function  \(p_{\xiv}(x, \av)\) of \(\| \xiv - \av \|^{2}\), where \(\xiv\) is a Gaussian element with zero mean in a Hilbert space \(\HM\) with norm \(\| \cdot\| \) and \(\av\in \HM \). It is well known that \(p_{\xiv}(x, \av)\) can be considered as a density function of a weighted sum of non-central \( \chi^{2}\) distributions.   
An explicit but cumbersome  representation for \(p_{\xiv}(x,\av)\) in finite dimensional space \(\HM\) is available (see e.g. Section 18 in \cite{JKotzB1994}). 
However, it involves some special characteristics of the related Gaussian measure which makes it hard to use 
in specific situations.
Our results from Theorem~\ref{l: density est 2} and by Lemma~\ref{l: density est} 
are much more transparent and provide 
sharp uniform and non-uniform upper bounds on the underlying density respectively. 

One can even get two-sided bounds for \(p_{\xiv}(x, \av)\) but under additional conditions, see e.g. \cite{christoph1996}.  
Asymptotic properties of  \(p_{\xiv}(x, \av)\),
small balls probabilities \( \P\bigl(\| \xiv - a\| \leq \varepsilon \bigr) \),
or large deviation bounds \( \P\bigl(\| \xiv \| \geq 1/\varepsilon \bigr) \)
for small \( \varepsilon \)
can be found e.g. in
\cite{Bogach98}, \cite{LedouxTalag2002}, \cite{LiSh05},  
\cite{Lifshits2012} and \cite{Yurin95}.

The paper is organized as follows: 
a list of examples motivating our results and possible applications are given in Section~\ref{Sapplexamples}. 
Section~\ref{SmainresGC} collects the main results.
The proofs are given in Section~\ref{SproofsGC}.  
Some technical results and non-uniform upper bounds for \(p_{\xiv}(x, \av)\) are presented in the appendix.

\subsection{Application examples}
\label{Sapplexamples}
This section collects some examples where the developed results seem to be very useful.
\subsubsection{Bootstrap validity for the MLE}
\label{motivation 1}
Consider an independent sample \( \Yv = (Y_{1},\ldots, Y_{n})^{\T} \) with a joint distribution 
\( \P = \prod_{i=1,\ldots,n} P_{i} \).
The parametric maximum likelihood approach assumes that \( \P \) belongs to a given parametric family 
\( \bigl( \P_{\thetav} \, , \thetav \in \Theta \subseteq \R^{\dimp} \bigr) \) dominated by a measure 
\( \muv \), that is, \( \P = \P_{\thetavs} \) for \( \thetavs \in \Theta \).
The corresponding log-likelihood function can be written as a sum of marginal log-likelihoods 
\( \ell_{i}(Y_{i},\thetav) \):
\begin{EQA}
	L(\thetav)
	& \eqdef &
	\log \frac{d\P_{\thetav}}{d\muv}(\Yv)
	=
	\sumi \ell_{i}(Y_{i},\thetav) ,
	\qquad
	\ell_{i}(Y_{i},\thetav)
	=
	\log \frac{dP_{i,\thetav}}{d\mu_{i}}(Y_{i}). 
	\label{LYtsieliYit}
\end{EQA}
The MLE \( \tilde{\thetav} \) of the true parameter \( \thetavs \) is defined as 
the point of maximum of \( L(\thetav) \):
\begin{EQA}
	\tilde{\thetav}
	& \eqdef &
	\argmax_{\thetav \in \Theta} L(\thetav) ,
	\qquad
	L(\tilde{\thetav})
	\eqdef
	\max_{\thetav \in \Theta} L(\thetav). 
	\label{ttvdefamtLYt}
\end{EQA}
If the parametric assumption is misspecified, the target \( \thetavs \) is defined as the best parametric fit:
\begin{EQA}
	\thetavs
	& \eqdef &
	\argmax_{\thetav \in \Theta} \E L(\thetav) .
	\label{tvsdefamtLYt}
\end{EQA}
The likelihood based confidence set \( \CS(\ZZ) \) for the target parameter \( \thetavs \) is given by
\begin{EQA}
	\CS(\ZZ)
	& \eqdef &
	\bigl\{ \thetav \colon L(\tilde{\thetav}) - L(\thetav) \leq \ZZ \bigr\}.
	\label{CSzdeftLttLtz}
\end{EQA}
The value \( \ZZ \) should be selected to ensure the prescribed coverage probability \( 1 - \alpha \):
\begin{EQA}
	\P\bigl( \thetavs \not\in \CS(\ZZ) \bigr)
	& \leq &
	\alpha .
	\label{PtsniEzalp}
\end{EQA}
However, it depends on the unknown measure \( \P \).
The bootstrap approach is a resampling technique based on  
the conditional distribution of the reweighted log-likelihood \( \Lb(\thetav) \)
\begin{EQA}
	\Lb(\thetav)
	&=&
	\sumi \ell_{i}(Y_{i},\thetav) \wb_{i}
	\label{LbYtsieliYit}
\end{EQA}
with i.i.d. random weights \( \wb_{i} \) given the data \( \Yv \).
Below we assume that \( \wb_{i} \sim \ND(1,1) \). 
The bootstrap confidence set is defined as 
\begin{EQA}
	\CS^{\sbt}(\ZZ)
	& \eqdef &
	\bigl\{ \thetav \colon \sup_{\thetavc \in \Theta} \Lb(\thetavc) - \Lb(\thetav) \leq \ZZ \bigr\}.
	\label{CSzdeftLttLtz}
\end{EQA}
The bootstrap distribution is perfectly known and the bootstrap quantile \( \ZZbt \) is defined by
the condition
\begin{EQA}
	\Pb\bigl( \tilde{\thetav} \not\in \CS^{\sbt}(\ZZbt) \bigr)
	& = &
	\Pb\Bigl( \sup_{\thetav \in \Theta} \Lb(\thetav) - \Lb(\tilde{\thetav}) > \ZZbt \Bigr)
	=
	\alpha .
	\label{PbttniCSbZZ}
\end{EQA}
The bootstrap approach suggests to use \( \ZZbt \) in place of \( \ZZ \) to ensure \eqref{PtsniEzalp}
in an asymptotic sense. 
Bootstrap consistency means that for \( n \) large
\begin{EQA}
	\P\bigl( \thetavs \not\in \CS(\ZZbt) \bigr)
	&=&
	\P\bigl( L(\tilde{\thetav}) - L(\thetavs) > \ZZbt \bigr)
	\approx 
	\alpha ;
	\label{PtsniCSZbLLZZb}
\end{EQA}
see e.g. \cite{SpZh2014}.
A proof of this result is quite involved. 
The key steps are the following two approximations: 
\begin{EQA}
	\label{suptiTLLtts12}
	\sup_{\thetav \in \Theta} L({\thetav}) - L(\thetavs)
	& \approx &
	\frac{1}{2} \bigl\| \xiv + \av \bigr\|^{2},
	\\
	\sup_{\thetav \in \Theta} \Lb(\thetav) - \Lb(\tilde{\thetav}) 
	& \approx &
	\frac{1}{2} \bigl\| \xivb \bigr\|^{2},
\end{EQA} 
where \( \xiv \) is a Gaussian vector with the variance 
\( \Sigma \) given by
\begin{EQA}
	\Sigma 
	& \eqdef & 
	\DP^{-1} \Var \bigl[ \nabla L(\thetavs) \bigr] \DP^{-1},
	\qquad
	\DP^{2} = - \nabla^{2} \E L(\thetavs) ,
	\label{SigdefDm1DP2}
\end{EQA} 
while \( \xivb \) is conditionally (given \( \Yv \)) Gaussian w.r.t. the bootstrap measure \( \Pb \)
with the covariance \( \Sigmab \) given by
\begin{EQA}
	\Sigmab
	& \eqdef &
	\DP^{-1} \left( \sumi \nabla \ell_{i}(Y_{i},\thetav) \bigl\{ \nabla \ell_{i}(Y_{i},\thetav) \bigr\}^{\T} 
	\right) \DP^{-1} .
	\label{SbdefDm1s1nDm1}
\end{EQA}
The vector \( \av \) in \eqref{suptiTLLtts12} is the so called modeling bias and it vanishes if 
the parametric assumption \( \P = \P_{\thetavs} \) is precisely fulfilled. 
The matrix Bernstein inequality ensures that \( \Sigmab \) is close to \( \Sigma \) in the operator norm
for \( n \) large; see e.g. \cite{Tropp2012}.
This yields bootstrap validity under the true parametric assumption in a weak sense.
However, for quantifying the quality of the bootstrap approximation one has to measure the distance 
between two high dimensional Gaussian distributions \( \ND(\av,\Sigma) \) and \( \ND(0,\Sigmab) \).
The recent paper \cite{SpZh2014} used the approach based on the Pinsker inequality which gives a bound in the total variation 
distance \(\| \cdot\|_{\TV} \) via the Kullback-Leibler divergence between these two measures.
A related bound involves the Frobenius norm  \(\| \cdot\|_{\Fr} \)  of the matrix 
\( \Sigma^{-1/2} \Sigmab \Sigma^{-1/2} - \Id_{\dimp} \)
and the norm of the vector \( \betav \eqdef \Sigma^{-1/2} \av \):
\begin{EQA}
	\bigl\| \ND(\av,\Sigma) - \ND(0,\Sigmab) \bigr\|_{\TV}
	& \leq &
	\frac{1}{2} \Bigl( 
	\bigl\| \Sigma^{-1/2} \Sigmab \Sigma^{-1/2} - \Id_{\dimp} 
	\bigr\|_{\Fr}
	+ \bigl\| \Sigma^{-1/2} \av \bigr\| \Bigr);
	\qquad
	\label{TVdistfrb1}
\end{EQA}
see e.g. \cite{SpZh2014}.
However, if we limit ourselves to the centered balls
then these bounds can be significantly improved.
Namely, by the main result of Theorem~\ref {l: explicit gaussian comparison} and Corollary~\ref{Tgaussiancomparison3} below, we get under some technical conditions
\begin{EQA}
	\left| 
	\P\Bigl( \bigl\| \xiv + \av \bigr\|^{2} > 2\ZZbt \Bigr) - \alpha 
	\right|
	& \leq &
	\frac{\CONST}{\| \Sigma\|_{\Fr}} \Bigl( 
	\| \Sigma - \Sigmab \|_{1} + \| \av \|^{2}
	\Bigr) .
	\label{PG1QvttcYC12}
\end{EQA}
The ``small modeling bias'' condition on \( \av \)  from \cite{SpZh2014} means that the value 
\( \| \Sigma^{-1/2} \av \| \) is small and 
it ensures that a possible model misspecification does not destroy the validity of the bootstrap. 
Comparison of \eqref{PG1QvttcYC12}  with \eqref{TVdistfrb1} reveals a number of benefits of \eqref{PG1QvttcYC12}.
First, the ``shift'' term is proportional to the squared norm of the vector \( \av \),
while the bound \eqref{TVdistfrb1}  depends on the norm of 
\( \Sigma^{-1/2} \av \), i.e. on the whole spectrum of \( \Sigma \).   
Normalization by \( \Sigma^{-1/2} \) can significantly inflate
the vector \( \av \) in directions where the eigenvalues of 
\( \Sigma \) are small.
In the contrary, the bound \eqref{PG1QvttcYC12} only involves 
the squared norm \( \| \av \|^{2} \) and the Frobenius norm of \( \Sigma \),
and the improvement from \( \bigl\| \Sigma^{-1/2} \av \bigr\| \) 
to \( \| \av \|^{2}/\| \Sigma\|_{\Fr} \) can be enormous if some eigenvalues 
of \( \Sigma \) nearly vanish.
Further, the Frobenius norm 
\( \bigl\| \Sigma^{-1/2} \Sigmab \Sigma^{-1/2} - \Id_{\dimp} \bigr\|_{\Fr} \) can be much larger than the ratio
\( \bigl\| \Sigma - \Sigmab \bigr\|_{1} \big/ \| \Sigma\|_{\Fr} \)
by the same reasons.

\subsubsection{Prior impact in linear Gaussian modeling}
\label{motivation 2}
Consider a linear regression model
\begin{EQA}
	Y_{i}
	&=&
	\Psi_{i}^{\T} \thetav + \varepsilon_{i}
	\label{YiPsiiTtts}
\end{EQA}
The assumption of homogeneous Gaussian errors \( \eps_{i} \sim \ND(0,\sigma^{2}) \) yields the log-likelihood
\begin{EQA}
	L(\thetav)
	&=&
	- \frac{1}{2 \sigma^{2}} \sumi (Y_{i} - \Psi_{i}^{\T} \thetav)^{2}
	+ R
	=
	- \frac{1}{2 \sigma^{2}} \bigl\| \Yv - \Psiv^{\T} \thetav \bigr\|^{2} + R ,
	\label{LtheR12}
\end{EQA}
where the term \( R \) does not depend on \( \thetav \).
A Gaussian prior \( \prior = \prior_{\GP} = \ND\bigl( 0, \GP^{-2} \bigr) \) results in
the posterior
\begin{EQA}
	\vthetav_{\GP} \cond \Yv
	& \propto &
	\exp\left( L(\thetav) - \frac{1}{2} \| \GP \thetav \|^{2} \right)
	\propto
	\exp\left( 
	- \frac{1}{2 \sigma^{2}} \bigl\| \Yv - \Psiv^{\T} \thetav \bigr\|^{2} 
	- \frac{1}{2} \| \GP \thetav \|^{2} 
	\right) .
	\label{vtYvLG1212}
\end{EQA}
We shall represent the quantity 
\( \LGP(\thetav) \eqdef L(\thetav) - \frac{1}{2} \| \GP \thetav \|^{2} \) in the form
\begin{EQA}
	\LGP(\thetav)
	&=&
	\LGP(\thetavb_{\GP}) - \frac{1}{2} \bigl\| \DPGP (\thetav - \thetavb_{\GP}) \bigr\|^{2} ,
	\label{LGPthtGDtt}
\end{EQA}
where
\begin{EQA}
	\thetavb_{\GP}
	& \eqdef &
	\bigl( \Psiv \Psiv^{\T} + \sigma^{2} \GP^{2} \bigr)^{-1} \Psiv \Yv ,
	\\
	\DPGP^{2}
	& \eqdef &
	\sigma^{-2} \Psiv \Psiv^{\T} + \GP^{2} .
	\label{tGDG22m2PPG}
\end{EQA}
In particular, it implies that the posterior distribution \( \P(\vthetav_{\GP} \cond \Yv) \) 
of \( \vthetav_{\GP} \) given \( \Yv \) is \( \ND(\thetavb_{\GP},\DPGP^{-2}) \).
A contraction property is a kind of concentration of the posterior on the elliptic set
\begin{EQA}
	E_{\GP}(\rr)
	&=&
	\bigl\{ \thetav \colon \| \QL (\thetav - \thetavb_{\GP}) \| \leq \rr \bigr\} ,
	\label{ErtWttGr}
\end{EQA}
where \( \QL \) is a given linear mapping from \( \R^{\dimp} \).
The desirable credibility property manifests the prescribed conditional probability 
of \( \vthetav_{\GP} \in E(\rr_{\GP}) \) given \( \Yv \) 
with \( \rr_{\GP} \) defined for a given \( \alpha \) by
\begin{EQA}
	\P\Bigl( \bigl\| \QL \bigl( \vthetav_{\GP} - \thetavb_{\GP} \bigr) \bigr\| 
	\geq \rr_{\GP} \cond \Yv \Bigr)
	& = &
	\alpha .
	\label{PGYvtpiG}
\end{EQA}
Under the posterior measure \( \vthetav_{\GP} \sim \ND(\thetavb_{\GP},\DPGP^{-2}) \), this bound
reads as
\begin{EQA}
	\P\bigl( \| \xiv_{\GP} \| \geq \rr_{\GP} \bigr)
	&=&
	\alpha \, 
	\label{PWDGm1xir}
\end{EQA}
with a zero mean normal vector \( \xiv_{\GP} \sim \ND(0, \Sigma_{\GP}) \)
for \( \Sigma_{\GP} = \QL \DPGP^{-2} \QL^{\T} \).
The question of a prior impact can be stated as follows:
whether the obtained credible set significantly depends on the prior covariance \( \GP \). 
Consider another prior \( \prior_{1} = \ND(0,\GP_{1}^{-2}) \) with the covariance matrix
\( \GP_{1}^{-2} \).
The corresponding posterior \( \vthetav_{\GP_{1}} \) is again normal but now with parameters
\( \thetavb_{\GP_{1}} 
= \bigl( \Psiv \Psiv^{\T} + \sigma^{2} \GP_{1}^{2} \bigr)^{-1} \Psiv \Yv \) and 
\( \DP_{\GP_{1}}^{2} = \sigma^{-2} \Psiv \Psiv^{\T} + \GP_{1}^{2} \).
We aim at checking the posterior probability of the credible set \( E_{\GP}(\rr_{\GP}) \): 
\begin{EQA}[c]
	\P\Bigl( \bigl\| \QL \bigl( \vthetav_{\GP_{1}} - \thetavb_{\GP} \bigr) \bigr\| 
	\geq \rr_{\GP} \cond \Yv \Bigr).
	\label{PG1QvttGrG}
\end{EQA}
Clearly this probability can be written as
\begin{EQA}[c]
	\P\Bigl( \bigl\| \xiv_{\GP_{1}} + \av \bigr\| \geq \rr_{\GP} \Bigr)
	\label{PxiG1marG1}
\end{EQA}
with \( \xiv_{\GP_{1}} \sim \ND(0,\Sigma_{\GP_{1}}) \) for 
\( \Sigma_{\GP_{1}} = \QL \DP_{\GP_{1}}^{-2} \QL^{\T} \) and 
\begin{EQA}
	\av
	& \eqdef &
	\QL\bigl( \thetavb_{\GP_{1}} - \thetavb_{\GP} \bigr) .
	\label{aWtGtG1def}
\end{EQA}
Therefore, 
\begin{EQA}
	\left| 
	\P\Bigl( 
	\bigl\| \QL \bigl( \vthetav_{\GP_{1}} - \thetavb_{\GP} \bigr) \bigr\|	\geq \rr_{\GP} \cond \Yv 
	\Bigr) - \alpha 
	\right|
	& \leq &
	\sup_{\rr > 0} \left| 
	\P\Bigl( \bigl\| \xiv_{\GP_{1}} - \av \bigr\| \geq \rr \Bigr) 
	- \P\Bigl( \bigl\| \xiv_{\GP} \bigr\| \geq \rr \Bigr)
	\right| \, .
	\label{PG1WvttGrGPr}
\end{EQA}
Again, the Pinsker inequality allows to upperbound the total variation distance between 
the Gaussian measures \( \ND(0,\Sigma_{\GP}) \) and 
\( \ND(\av,\Sigma_{\GP_{1}}) \), 
however the answer is given via the Kullback-Leibler distance
between these two measures:
\begin{EQA}
	\bigl\| \ND(0,\Sigma_{\GP}) - \ND(\av,\Sigma_{\GP_{1}}) \bigr\|_{\TV}
	& \leq &
	\CONST \Bigl( 
	\bigl\| \Sigma_{\GP}^{-1/2} \Sigma_{\GP_{1}} \Sigma_{\GP}^{-1/2} - \Id_{\dimp} 
	\bigr\|_{\Fr}
	+ \bigl\| \Sigma_{\GP_{1}}^{-1/2} \av \bigr\| \Bigr);
	\qquad
	\label{TVdistGG1}
\end{EQA}
see e.g. \cite{PaSp2013}.
Results of this paper allow to significantly improve this bound. 
In particular, only the nuclear norm 
\( \bigl\| \Sigma_{\GP} - \Sigma_{\GP_{1}} \bigr\|_{1} \), the norm of the vector
\( \av \) and the Frobenius norm of \( \Sigma_{\GP} \) are involved.
If \( \GP^{2} \geq \GP_{1}^{2} \), then \( \Sigma_{\GP} \leq \Sigma_{\GP_{1}} \) and
\begin{EQA}
	\bigl\| \Sigma_{\GP} - \Sigma_{\GP_{1}} \bigr\|_{1}
	&=&
	\tr \Sigma_{\GP_{1}} - \tr \Sigma_{\GP}
	\label{SGmSG1tGtG1}
\end{EQA}
and thus, by the main result of Theorem~\ref {l: explicit gaussian comparison} and Corollary~\ref{Tgaussiancomparison3} below, it holds under some technical conditions
\begin{EQA}
	\left| 
	\P\Bigl( \bigl\| \QL \bigl( \vthetav_{\GP_{1}} - \thetavb_{\GP} \bigr) \bigr\| 
	\geq 
	\rr_{\GP} \cond \Yv \Bigr) - \alpha 
	\right|
	& \leq &
	\frac{\CONST\bigl( \tr \Sigma_{\GP_{1}} - \tr \Sigma_{\GP} + \| \av \|^{2} \bigr)}
	{\| \Sigma_{\GP}\|_{\Fr}} \, .
	\label{PG1QvttGrGcYC12}
\end{EQA}
This new bound significantly outperforms \eqref{TVdistGG1};
see the discussion at the end of Section~\ref{motivation 1}.

\subsubsection{Nonparametric Bayes approach}
\label{motivation 3}

One of the central question in the \emph{nonparametric Bayes} approach 
is whether one can use the corresponding credible set as 
a \emph{frequentist confidence set}
for the true underlying mean \( \E \Yv = \fvs = \Psiv^{\T} \thetavs \).
Here we consider the model 
\( \Yv = \fvs + \epsv = \Psiv^{\T} \thetav + \epsv \) in \( \R^{n} \) with a homogeneous Gaussian noise 
\( \epsv \sim \ND(0,\sigma^{2} \Id_{n}) \) and a Gaussian prior \( \ND(0,\GP^{-2}) \) on \( \thetav \).
The credible set \( E_{\GP}(\rr) \) for \( \vthetav_{\GP} \) yields the credible set \( \CS_{\GP}(\rr) \) for the corresponding response  
\( \fv = \Psiv^{\T} \thetav \):
\begin{EQA}
	\CS(\rr)
	&=&
	\bigl\{ \fv = \Psiv^{\T} \thetav \colon \| \WF \, \Psiv^{\T} (\thetav - \thetavb_{\GP}) \| 
	\leq \rr \bigr\} ,
	\label{ErtWttGrNB}
\end{EQA}
with some linear mapping \( \WF \).
The radius \( \rr = \rr_{\GP} \) is fixed to ensure the prescribed credibility \( 1 - \alp \) for 
the corresponding set \( \CS(\rr_{\alp}) \)
due to \eqref{PGYvtpiG} or \eqref{PWDGm1xir} with \( \QL = \WF \Psiv^{\T} \) and 
\( \Sigma_{\GP} = \WF \Psiv^{\T} \DPGP^{-2} \Psiv \WF^{\T} = \sigma^{2} \WF \Pi_{\GP} \WF^{\T} \), with \( \Pi_{\GP} = \Psiv^{\T} \bigl( \Psiv \Psiv^{\T} + \sigma^{2} \GP^{2} \bigr)^{-1} \Psiv \).
The frequentist coverage probability of the true response \( \fvs \) is given by 
\begin{EQA}
	\P\bigl( \fvs \in \CS_{\GP}(\rr) \bigr)
	&=&
	\P\bigl( \| \WF (\fvs - \Psiv^{\T} \thetavb_{\GP}) \| \leq \rr \bigr) 
	=
	\P\bigl( \| \WF \, \Psiv^{\T} (\thetavs -\thetavb_{\GP}) \| \leq \rr \bigr) .
	\label{PfvsniErCfPG}
\end{EQA}
The aim is to show that the the latter is close to \( 1 - \alp \).
For the posterior mean 
\( \thetavb_{\GP} = \bigl( \Psiv \Psiv^{\T} + \sigma^{2} \GP^{2} \bigr)^{-1} \Psiv \Yv \),
it holds 
\begin{EQA}
	\E \bigl[ \WF \bigl( \fvs - \Psiv^{\T} \thetavb_{\GP} \bigr) \bigr] 
	&=& 
	\WF \bigl( \Id - \Pi_{\GP} \bigr) \fvs 
	\eqdef
	\av.
	\label{adefCPiGfs}
\end{EQA}
Further,
\begin{EQA}
	\Sigmafr
	\eqdef
	\Var\bigl\{ \WF \bigl( \fvs - \Psiv^{\T} \thetavb_{\GP} \bigr) \bigr\}
	&=&
	\Var \bigl\{ \WF \Pi_{\GP} \, \epsv \bigr\}
	=
	\sigma^{2} \WF \Pi_{\GP}^{2} \WF^{\T}
	\label{SiFrCPiG2CT}
\end{EQA}
and hence, the vector \( \WF \bigl( \fvs - \Psiv^{\T} \thetavb_{\GP} \bigr) \) is under 
\( \P \) normal with mean \( \av = \WF \bigl( \Id - \Pi_{\GP} \bigr) \fvs \) and 
variance \( \Sigmafr = \sigma^{2} \WF \Pi_{\GP}^{2} \WF^{\T} \).
Therefore, 
\begin{EQA}
	\P\bigl( \fvs\in \CS_{\GP}(\rr) \bigr)
	&=&
	\P\bigl( \bigl\| \av + \xivfr \bigr\| \leq \rr \bigr). 
	\label{PfsniErPapB}
\end{EQA}
Here \( \xivfr \sim \ND(0, \Sigmafr) \).
So, it suffices to compare two probabilities
\begin{EQA}
	\P\bigl( \bigl\| \av + \xivfr \bigr\| \leq \rr \bigr)
	& \text{ vs } &
	\P\bigl( \bigl\| \xiv_{\GP} \bigr\| \leq \rr \bigr)
	\label{PaSf12xiPS12r}
\end{EQA}
for all \( \rr \geq 0 \).
Existing results cover only very special cases; see e.g. 
\cite{Jo2012, Bo2011, PaSp2013, Ca2012, CaNi2013, belitser2017} and references therein.
Most of the mentioned results are of asymptotic nature and do not quantify the accuracy of the coverage probability. 
The results of this paper enable to study this accuracy in a straightforward way.
Note first that the covariance operators 
\( \Sigmafr = \sigma^{2} \WF \Pi_{\GP}^{2} \WF^{\T} \)
and 
\( \Sigma_{\GP} = \sigma^{2} \WF \Pi_{\GP} \WF^{\T} \) satisfy \( \Sigmafr \le \Sigma_{\GP} \).
This yields that
\begin{EQA}
	\bigl\| \Sigma_{\GP} - \Sigmafr \bigr\|_{1}
	&=&
	\tr \Sigma_{\GP} - \tr \Sigmafr \, .
	\label{SGSftrSGSf}
\end{EQA}
Theorem~\ref {l: explicit gaussian comparison} and Corollary~\ref{Tgaussiancomparison3} allow to evaluate under some technical conditions the coverage probability of the credibility set 
\begin{EQA}
	\bigl| \P\bigl( \fvs \not\in \CS_{\GP}(\rr_{\GP}) \bigr) - \alpha \bigr|
	& \leq &
	\frac{\CONST\bigl( \tr \Sigma_{\GP} - \tr \Sigmafr + \| \av \|^{2} \bigr)}
	{\| \Sigmafr \|_{\Fr}} \, .
	\label{PfsniEGrGal}
\end{EQA}
The right hand-side of this bound can be easily evaluated.
The value \( \| \av \| = \WF \bigl( \Id - \Pi_{\GP} \bigr) \fvs \) is small 
under usual smoothness assumptions on \( \fvs \).
The difference 
\begin{EQA}
	\tr \Sigma_{\GP} - \tr \Sigmafr
	& = &
	\sigma^{2} \tr \bigl\{ \WF (\Pi_{\GP} - \Pi_{\GP}^{2}) \WF^{\T} \bigr\}
	\label{trSGtrSfs2APi}
\end{EQA} 
is small under standard condition on the design \( \Psiv \) and on the spectrum of \( \GP^{2} \); see e.g. 
\cite{SP2013_rough}.

\subsubsection{Central Limit Theorem in finite- and infinite-dimensional spaces} 
\label{motivation 4}
Another motivation for the current paper comes from the limit theorem in high-dimensional spaces for convex sets, in particular, for non-centred balls. 
Applications of smoothing inequalities require to evaluate the probability of hitting the vicinity of a convex set, see e.g. \cite{Berntkus2003b},~\cite{Bentkus2014}. 
This question is closely related to the anti-concentration inequalities considered below in   Theorem~\ref{band of GE}. 
Recently, significant interest was shown in understanding of the anti-concentration phenomenon for weighted sums of random variables, particularly, in random matrix and number theory. 
We refer the interested reader to~\cite{RudVesh2008},~\cite{Zaitsev2016}.

Let \(Y_{1},\ldots,Y_{n}\) be i.i.d. random vectors in \( \R^{p} \).
Assume that all these vectors have zero mean and the covariance operator \(\Sigma\). Let \(X\) be a Gaussian random vector in \( \R^{p} \)
with zero mean and the same covariance operator \(\Sigma\). 
We are interested to bound 
\begin{EQA}[c]
	\label{Bentkus_1}
	\delta(\Cclass) 
	= 
	\sup_{A \in \Cclass} 
	\left|
	\P\left(\frac{Y_{1}+\dots+Y_{n}}{\sqrt{n}}\in A\right) - \P(X\in A)
	\right|
\end{EQA}
for some class \(\Cclass\) of Borel sets. 
It is worth emphasizing that the probabilities of hitting the vicinities of a set \(A \in \Cclass\), play the crucial role in the form of the bound for \(\delta(\Cclass)\).
Assume the class \(\Cclass\) satisfies the following two conditions:

(i) Class \(\Cclass\) is invariant under affine symmetric transformations, that is, \(\D A + \av \in \Cclass\)
if \(\av \in \R^{p}\) and \(\D: \R^{p}\to \R^{p}\) is a linear symmetric invertible operator.

(ii) Class \(\Cclass\) is invariant under taking \(\varepsilon\)-neighborhoods for all \(\varepsilon > 0\). More precisely,
\( A^{\varepsilon}, A^{-\varepsilon} \in \Cclass \) if \(A \in \Cclass\), where 
\begin{EQA}[c]
	A^{\varepsilon} = \{x \in \R^{p}: \rho_A(x)\leq  \varepsilon \}\,\,\,	\text{and}\,\,\, A^{-\varepsilon} = \{x \in A: B_{\varepsilon}(x) \subset A\},
\end{EQA}
with \(\rho_A(x) = \inf_{y\in A} |x-y| \) as the distance between \(A \subset \R^{p} \) and \(x \in \R^{p}\), and
\( B_{\varepsilon}(x) = \{y \in \R^{p}: |x - y| \leq \varepsilon\}\).

Let \(X_{0}\) be a Gaussian random vector in \( \R^{p} \)
with zero mean and the identity covariance operator \(\Id\). Assume that the class  \(\Cclass\) in \eqref{Bentkus_1} is such that  for all \(A \in \Cclass\) and \(\varepsilon > 0 \) 
\begin{EQA}[c]
	\label{Bentkus_2}
	\P(X_{0} \in A^{\varepsilon} \backslash A)\leq a_p\, \varepsilon,\,\,\,\,\,\, \P(X_{0} \in A \backslash A^{-\varepsilon})\leq a_p\, \varepsilon,
\end{EQA}
where \(a_p = a_p(\Cclass)\) is the so called isoperimetric constant of  \(\Cclass\), e.g. taking \(\Cclass\) as the class of all convex sets in \(\R^{p}\) we get \(a_p \leq 4\,p^{1/4}\); see~\cite{Ball1993}. 

It is known (see~\cite{Bentkus2014}[Theorem 1.2]) that if \(\Cclass\) satisfies conditions (i), (ii) and~\eqref{Bentkus_2} then for some absolute constant \(C\) one has
\begin{EQA}[c]
	\label{Bentkus_3}
	\delta(\Cclass)\leq C\,(1+a_p)\,\E |Y_{1}|^3/\sqrt{n}.
\end{EQA}
Therefore, the inequalities~(\ref{Bentkus_2}), i.e. knowledge of \(a_p\), play the crucial role in the form of the bound~\eqref{Bentkus_3}.

We have a similar situation  in infinite-dimensional spaces. Though contrary to the finite dimensional case even if  \(\Cclass\) is a rather small class of "good" subsets, e.g. the  class of all balls, the convergence of 
\(\P\Bigl( {(Y_{1}+\dots+Y_{n})}/{\sqrt{n}}\in A \Bigr)\) to 
\(\P\bigl( X\in A \bigr)\) for each \(A \in \Cclass\), implied by the central limit theorem, can not be uniform in \(A \in \Cclass\); see e.g. ~\cite{Sazonov1981}[pp. 69--70]. 
However, the convergence becomes uniform for a class of all balls with center at some fixed point, say \(\av\). 
Such classes naturally appear in various statistical problems; see e.g.~\cite{ProkhUlyanov2013} or our previous 
application examples. 
Thus, similar to the inequalities \eqref{Bentkus_2} we need to get sharp bounds for 
the probability \( \P(x < \|X - \av\|^{2} < x + \varepsilon) \)
for the Gaussian element \( X \) in a Hilbert space \( \HM \).
Due to our Theorem~\ref{band of GE} below, it holds under some technical conditions that
\begin{EQA}[c]
	\P\bigl( x < \|X - \av\|^{2} < x + \varepsilon \bigr)
	\leq
	\frac{\CONST \, \varepsilon}{\| \Sigma \|_{\Fr}} \, 
\end{EQA}
for an absolute constant \( \CONST \).

\section{Main results}
\label{SmainresGC}

Throughout the paper the following notation are used. 
We write \( a \lesssim b \) (\( a \gtrsim b \)) if there exists some absolute constant \( C \) such that \( a \le C b \) (\( a \geq C b \) resp.). 
Similarly, \(a \asymp b\) means that there exist \(c, C\) such that \(c \, a \le b \le C \, a \).  
\( \R \) (resp. \( \C \)) denotes the set of all real (resp. complex) numbers.
We assume that all random variables are defined on common probability space \( (\Omega, \mathfrak{F}, \P) \) and take values in a real separable Hilbert space \(\HM \) with a scalar product \( \langle\cdot, \cdot\rangle \)  and  norm \( \| \cdot \| \). If dimension of \(\HM \) is finite and equals \(p \), we shall write \(\R^p \) instead of \(\HM \).  Let \( \E \) be the mathematical expectation with respect to \( \P \).  
We also denote by \(\mathfrak{B}(\HM)\) the Borel \(\sigma\)-algebra. 

For a self-adjoint operator  \(\A \) with eigenvalues \(\lambda_{k}(\A), k \geq 1\), let us denote by \( \| \A \| \) and \( \| \A \|_{1} \) the operator and nuclear (Schatten-one) norm by \( \| \A \| \eqdef \sup_{\|x\|=1} \| \A x\| \) and 
\begin{EQA}[c]
	\| \A \|_{1} \eqdef \tr |\A |= \sum_{k=1}^{\infty} |\lambda_{k}(\A)|. 
\end{EQA}  
We suppose below that \( \A \) is a nuclear and \( \| \A \|_{1} < \infty \).

Let \(\Sigma_{\xiv}\) be a covariance operator  of an arbitrary Gaussian random element in \(\HM\). By \(\{\lambda_{k\xiv}\}_{k \geq 1}\) we denote the set of its eigenvalues arranged in the non-increasing order, i.e. \(\lambda_{1\xiv} \geq \lambda_{1\xiv} \geq \ldots  \), and let 
\( \lambdav_{\xiv} \eqdef \diag(\lambda_{j \xiv})_{j=1}^{\infty} \). 
Note that  \(\sum_{j=1}^{\infty} \lambda_{j \xiv} < \infty\).  
Introduce the following quantities
\begin{EQA}[c]
	\Frobg_{k\xiv}^{2} 
	\eqdef 
	\sum_{j=k}^{\infty} \lambda_{j \xiv}^{2}, \quad k = 1,2,
	\label{Lambda def}
\end{EQA} 
and
\begin{EQA}
	\CONSTdlt(\Sigma_{\xiv})
	& = &
	\begin{cases}
		\Frobg_{1\xiv}^{-1} \, ,
		& \text{if } 
		3  \lambda_{1,\xiv}^{2} \le \Frobg_{1\xiv}^{2} \, ,  
		\\
		(\lambda_{1\xiv}\Frobg_{2\xiv})^{-1/2}  , 
		& \text{if } 3  \lambda_{1\xiv}^{2} > \Frobg_{1\xiv}^{2},\,\,
		3  \lambda_{2\xiv}^{2} \leq \Frobg_{2\xiv}^{2}, 
		\\
		(\lambda_{1\xiv} \lambda_{2\xiv})^{-1/2}  , 
		& \text{if } 3  \lambda_{1\xiv}^{2} > \Frobg_{1\xiv}^{2},\,\,
		3  \lambda_{2\xiv}^{2} > \Frobg_{2\xiv}^{2} .
	\end{cases}
	\label{CdelSxiSeta}
\end{EQA}
It is easy to see that \( \| \Sigma_{\xiv}\|_{\Fr} = \Frobg_{1\xiv} \). Moreover, it is straightforward to check that 
\begin{EQA}[c]
	\frac{0.9}{(\Frobg_{1\xiv}\Frobg_{2\xiv})^{1/2}}
	\leq 
	\CONSTdlt(\Sigma_{\xiv}) 
	\leq  
	\frac{1.8}{(\Frobg_{1\xiv}\Frobg_{2\xiv})^{1/2}}.
\end{EQA}
Hence, \( 	\CONSTdlt(\Sigma_{\xiv})  \asymp (\Frobg_{1\xiv}\Frobg_{2\xiv})^{-1/2} \) and therefore equivalent results can be formulated in terms of any of the quantities introduced.
The following theorem is our main result.

\begin{theorem}
	\label{l: explicit gaussian comparison}
	Let \(\xiv\) and \(\etav\) be Gaussian elements in \(\HM\) with zero mean and covariance operators 
	\(\Sigma_{\xiv}\) and \(\Sigma_{\etav}\) respectively. 
	For any \( \av \in \HM \) 
	\begin{EQA}[rcl]
		&& \nquad
		\sup_{x > 0} \left|\P( \| \xiv - \av \| \leq x) - \P( \| \etav  \| \leq x) \right| 
		\\ 
		&&
		\lesssim  
		\Bigl\{ \CONSTdlt(\Sigma_{\xiv}) + \CONSTdlt(\Sigma_{\etav}) \Bigr\}
		\bigg( \| \lambdav_{\xiv} - \lambdav_{\etav}  \|_{1} + \| \av\|^{2}\bigg).
		\label{expl_gauss}
	\end{EQA}
\end{theorem}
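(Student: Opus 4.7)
My plan is to pass to squared norms (the Kolmogorov distance is preserved by the monotone map $x\mapsto x^{2}$ on $[0,\infty)$) and then compare $\|\xiv-\av\|^{2}$ with $\|\etav\|^{2}$ via characteristic functions, feeding the uniform density bound of Theorem~\ref{l: density est 2} into an Esseen-type smoothing inequality. Diagonalizing $\Sigma_{\xiv}$ in its own eigenbasis, letting $\{Z_{j}\}$ be i.i.d.\ standard normals and $\{a_{j}\}$ the coordinates of $\av$ in that basis, one has the distributional identities $\|\xiv\|^{2} \myeq \sum_{j}\lambda_{j\xiv}Z_{j}^{2}$, $\|\etav\|^{2} \myeq \sum_{j}\lambda_{j\etav}Z_{j}^{2}$ and $\|\xiv-\av\|^{2} \myeq \sum_{j}(\sqrt{\lambda_{j\xiv}}Z_{j}-a_{j})^{2}$. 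In particular only the eigenvalue spectra, not the eigenvectors of $\Sigma_{\xiv}$ and $\Sigma_{\etav}$, enter the problem, which explains why $\|\lambdav_{\xiv}-\lambdav_{\etav}\|_{1}$ (and not an operator-norm distance) is the right quantity on the right-hand side.

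The first technical step is to estimate the difference of characteristic functions. Writing $\phi_{\xiv}(t)=\prod_{j}(1-2it\lambda_{j\xiv})^{-1/2}$ and $\phi_{\etav}(t)$ analogously, the characteristic function of $\|\xiv-\av\|^{2}$ equals $\phi_{\xiv}(t)\,g(t)$ with $g(t)=\exp\bigl(it\sum_{j}a_{j}^{2}/(1-2it\lambda_{j\xiv})\bigr)$. Splitting
\begin{EQA}[c]
\phi_{\|\xiv-\av\|^{2}}(t) - \phi_{\|\etav\|^{2}}(t)
= \phi_{\xiv}(t)\bigl(g(t)-1\bigr) + \bigl(\phi_{\xiv}(t)-\phi_{\etav}(t)\bigr),
\end{EQA}
I would use $|1-2it\lambda_{j\xiv}|\ge 1$ together with the fact that the exponent of $g$ has nonpositive real part to obtain $|g(t)-1|\le |t|\|\av\|^{2}$, and a first-order expansion of $\log(1+\cdot)$ applied to $(1-2it\lambda_{j\xiv})/(1-2it\lambda_{j\etav})$ to obtain $|\phi_{\xiv}(t)-\phi_{\etav}(t)|\lesssim |t|\|\lambdav_{\xiv}-\lambdav_{\etav}\|_{1}\max(|\phi_{\xiv}(t)|,|\phi_{\etav}(t)|)$. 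The two bounds combine to
\begin{EQA}[c]
\bigl|\phi_{\|\xiv-\av\|^{2}}(t)-\phi_{\|\etav\|^{2}}(t)\bigr|
\lesssim |t|\bigl(\|\av\|^{2}+\|\lambdav_{\xiv}-\lambdav_{\etav}\|_{1}\bigr)\max\bigl(|\phi_{\xiv}(t)|,|\phi_{\etav}(t)|\bigr).
\end{EQA}

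The second step invokes an Esseen-type smoothing inequality: for any $T>0$,
\begin{EQA}[c]
\sup_{x>0}\bigl|\P(\|\xiv-\av\|\le x) - \P(\|\etav\|\le x)\bigr|
\lesssim \int_{-T}^{T}\frac{|\phi_{\|\xiv-\av\|^{2}}(t)-\phi_{\|\etav\|^{2}}(t)|}{|t|}\,dt
+ \frac{\sup_{y}p_{\etav}(y,0)}{T}.
\end{EQA}
The remainder $\sup_{y}p_{\etav}(y,0)$ is bounded by $\lesssim\CONSTdlt(\Sigma_{\etav})$ courtesy of Theorem~\ref{l: density est 2}. After cancelling $|t|$ in the integral, what remains is $(\|\av\|^{2}+\|\lambdav_{\xiv}-\lambdav_{\etav}\|_{1})\int_{|t|\le T}\max(|\phi_{\xiv}|,|\phi_{\etav}|)\,dt$, and the Fourier-inversion estimates underlying Theorem~\ref{l: density est 2} yield $\int_{|t|\le T}|\phi_{\xiv}(t)|\,dt\lesssim \CONSTdlt(\Sigma_{\xiv})$ for the optimal $T$ in each regime. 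Putting the estimates together and optimizing $T$ yields the claimed bound.

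The main obstacle is this last characteristic-function integral: the decay rate of $|\phi_{\xiv}(t)|$ at infinity, and the correct choice of the truncation $T$, are genuinely regime-dependent, and this is exactly why $\CONSTdlt(\Sigma_{\xiv})$ is defined piecewise in \eqref{CdelSxiSeta} according to whether the spectrum of $\Sigma_{\xiv}$ is well-spread, has one dominant eigenvalue, or has two. In the third regime $|\phi_{\xiv}(t)|$ decays only like $|t|^{-1}$ so the full-line Fourier integral diverges and $T$ must be kept finite; the non-uniform pointwise density bound in Lemma~\ref{l: density est} is the natural tool for pushing the argument through there, by allowing one to balance the Esseen remainder against the shift term $\|\av\|^{2}$ and the spectral term $\|\lambdav_{\xiv}-\lambdav_{\etav}\|_{1}$ in that peaked regime.
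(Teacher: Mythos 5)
Your plan diverges genuinely from the paper's, and unfortunately it runs into a real obstruction that the paper's strategy is specifically designed to sidestep. The paper does \emph{not} use an Esseen smoothing inequality. Instead, it splits the problem by the triangle inequality into the pure-covariance case ($\av=0$) and the pure-shift case ($\Sigma_{\xiv}=\Sigma_{\etav}$), and in each case writes the difference of distribution functions via Chung's inversion formula combined with a Newton--Leibniz interpolation in a parameter $s$ (through $\V(s)=s\lambdav_{\xiv}+(1-s)\lambdav_{\etav}$, respectively $\avc(s)=\avc\sqrt{s}$). The crucial point is that $\partial_{s}f(t,s)$ carries an explicit factor of $it$, which cancels \emph{exactly} against the $1/t$ in the inversion formula. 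After that cancellation the $t$-integral is recognized as the Fourier inversion of the density of $\bar{Z}_{j}(s)+\|Z(s)\|^{2}$ (exponential plus quadratic form), hence is \emph{absolutely} convergent and bounded directly by $\sup_{x}p(x,s)\lesssim\CONSTdlt(\Sigma(s))$ via Theorem~\ref{l: density est 2}. No truncation parameter $T$, no smoothing term, no optimization.

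Your route, by contrast, must pick a finite truncation $T$, and this is precisely where it breaks down in the ``peaked'' regimes (cases 2 and 3 of the definition \eqref{CdelSxiSeta}). When at most two eigenvalues dominate, $|\phi_{\xiv}(t)|$ decays only like $(\lambda_{1\xiv}\lambda_{2\xiv})^{-1/2}|t|^{-1}$ at infinity, so $\int_{|t|\leq T}|\phi_{\xiv}(t)|\,dt$ behaves like $(\lambda_{1\xiv}\lambda_{2\xiv})^{-1/2}\log(\lambda_{2\xiv}T)$ and is \emph{not} uniformly $\lesssim\CONSTdlt(\Sigma_{\xiv})$. Writing $\delta\eqdef\|\av\|^{2}+\|\lambdav_{\xiv}-\lambdav_{\etav}\|_{1}$, the Esseen bound becomes $\delta\,\CONSTdlt(\Sigma_{\xiv})\log(\lambda_{2\xiv}T)+\CONSTdlt(\Sigma_{\etav})/T$; optimizing $T\asymp\CONSTdlt(\Sigma_{\etav})/\delta$ leaves a logarithmic excess $\delta\,\CONSTdlt\log(1/\delta)$ over the target $\delta\,\CONSTdlt$. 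You flag this very obstacle in your final paragraph, but the remedy you propose --- invoking the non-uniform pointwise density bound of Lemma~\ref{l: density est} --- does not close it: that lemma controls $p_{\xiv}(x,\av)$ in $x$, not the truncated Fourier integral $\int_{|t|\leq T}|\phi_{\xiv}|\,dt$, and supplies no mechanism for killing the $\log T$. The interpolation identity is not an optional stylistic choice in the paper's proof; it is the device that removes the $1/t$ singularity before one ever has to integrate the characteristic function, and without it (or some equivalent trick) the Esseen route produces a weaker, log-inflated bound. I would also note, as a secondary issue, that your claimed Lipschitz estimate $|\phi_{\xiv}(t)-\phi_{\etav}(t)|\lesssim|t|\,\|\lambdav_{\xiv}-\lambdav_{\etav}\|_{1}\max(|\phi_{\xiv}(t)|,|\phi_{\etav}(t)|)$ needs more care than a first-order expansion of $\log(1+\cdot)$ provides, since the relevant argument of the logarithm is not uniformly small in $t$; the paper again avoids this by never forming this difference and instead bounding $\tr\{(\lambdav_{\xiv}-\lambdav_{\etav})\OP(x,s)\}$ directly.
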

The proof of Theorem~\ref{l: explicit gaussian comparison}
is given in Section~\ref{SproofsGC}. 

We can see that the obtained bounds can be expressed in terms of the specific 
characteristics of the matrices \( \Sigma_{\xiv} \) and \( \Sigma_{\etav} \)
such as their operator and the Frobenius norms rather than the dimension \( p \).
Another nice feature of the obtained bounds is that they do not involve 
the inverse of \( \Sigma_{\xiv} \) or \( \Sigma_{\etav} \).
In other words, small or vanishing eigenvalues of \( \Sigma_{\xiv} \) or
\( \Sigma_{\etav} \) 
do not affect the obtained bounds in the contrary to the Pinsker bound.
Similarly, only the squared norm \( \| \av \|^{2} \) of the shift \( \av \)  
shows up in the results, while the Pinsker bound involves 
\( \| \Sigma_{\xiv}^{-1/2} \av \| \) which can be very large or infinite if 
\( \Sigma_{\xiv} \) is not well conditioned.

The representation \eqref{CdelSxiSeta} mimics well the three typical situations:
in the ``large-dimensional case'' with three or more significant eigenvalues
\( \lambda_{j\xiv} \), one can take 
\( \CONSTdlt(\Sigma_{\xiv}) = \| \Sigma_{\xiv} \|_{\Fr}^{-1} = \lambda_{1\xiv}^{-1} \).
In the ``two dimensional'' case, when the sum \( \Frobg_{k\xiv}^{2} \) is 
of the order \( \lambda_{k\xiv}^{2} \) for \( k=1,2 \), 
the bound only depends on the product \( (\lambda_{1\xiv} \lambda_{2\xiv})^{-1/2} \).
The intermediate case of a spike model with one large eigenvalue 
\( \lambda_{1\xiv} \) and many small eigenvalues \( \lambda_{j\xiv}, j \geq 2 \),
the bound depends on \( (\lambda_{1\xiv} \Frobg_{2\xiv})^{-1/2} \).
%

As it was mentioned earlier, the result of Theorem~\ref{expl_gauss} may be equivalently formulated in a ``unified'' way in terms of  \( (\Frobg_{1\xiv}\Frobg_{2\xiv})^{-1/2} \). 
Moreover, we specify the bound \eqref{expl_gauss}  
in the ``high-dimensional'' case which means at least three significantly 
positive eigenvalues of the matrices \( \Sigma_{\xiv} \) and \( \Sigma_{\etav} \).

\begin{corollary}
	\label{Tgaussiancomparison3}
	Let \(\xiv\) and \(\etav\) be Gaussian elements in \(\HM\) with zero mean and covariance operators 
	\(\Sigma_{\xiv}\) and \(\Sigma_{\etav}\) respectively.
	Then for any \( \av \in \HM \)
	\begin{EQA}[rcl]
		&& \nquad
		\sup_{x > 0} \left|\P( \| \xiv - \av \| \leq x) - \P( \| \etav  \| \leq x) \right| 
		\\ 
		&&
		\lesssim  
		\bigg(\frac{1}{(\Frobg_{1\xiv}\Frobg_{2\xiv})^{1/2}} + \frac{1}{(\Frobg_{1\etav}\Frobg_{2\etav})^{1/2}}\bigg) 
		\bigg( \| \lambdav_{\xiv} - \lambdav_{\etav}  \|_{1} + \| \av\|^{2}\bigg).
		\label{expl_gauss22}
	\end{EQA}
	Moreover, assume that
	\begin{EQA}[c]
		3 \| \Sigma_{\xiv}\|^{2} \le \| \Sigma_{\xiv}\|_{\Fr}^{2} 
		\quad \text{ and } 	\quad 
		3 \| \Sigma_{\etav}\|^{2} \le \| \Sigma_{\etav}\|_{\Fr}^{2} \, .
		\label{2plusdelta}
	\end{EQA} 
	Then for any \( \av \in \HM \) 
	\begin{EQA}
		&&
		\sup_{x > 0} \left|\P( \| \xiv - \av \| \leq x) - \P( \| \etav  \| \leq x) \right| 
		\\
		&& \qquad \lesssim  
		\bigg(
		\frac{1}{\| \Sigma_{\xiv}\|_{\Fr}} 
		+ \frac{1}{\| \Sigma_{\etav}\|_{\Fr}}
		\bigg) 
		\bigg( \| \lambdav_{\xiv} - \lambdav_{\etav} \|_{1} + \| \av\|^{2}\bigg).
		\label{expl_gauss 2}
	\end{EQA}
\end{corollary}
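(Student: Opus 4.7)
The plan is to derive both inequalities of Corollary~\ref{Tgaussiancomparison3} directly from Theorem~\ref{l: explicit gaussian comparison} by controlling the quantity $\CONSTdlt(\Sigma_{\xiv})$ in each of the three regimes of \eqref{CdelSxiSeta}. The only substantive work is verifying the two-sided estimate $\CONSTdlt(\Sigma_{\xiv}) \asymp (\Frobg_{1\xiv}\Frobg_{2\xiv})^{-1/2}$ that was already announced (with constants $0.9$ and $1.8$) right before Theorem~\ref{l: explicit gaussian comparison}; once that is in hand, both parts of the corollary reduce to plugging the appropriate estimate into the right-hand side of \eqref{expl_gauss}.

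For the first bound \eqref{expl_gauss22}, I would check the estimate $\CONSTdlt(\Sigma_{\xiv}) \leq 1.8\,(\Frobg_{1\xiv}\Frobg_{2\xiv})^{-1/2}$ case by case. In the first regime $3\lambda_{1\xiv}^2 \leq \Frobg_{1\xiv}^2$ we have $\Frobg_{2\xiv}^2 = \Frobg_{1\xiv}^2 - \lambda_{1\xiv}^2 \geq \tfrac{2}{3}\Frobg_{1\xiv}^2$, so $\Frobg_{1\xiv}^{-1} \leq (3/2)^{1/4}(\Frobg_{1\xiv}\Frobg_{2\xiv})^{-1/2}$. In the second regime $3\lambda_{2\xiv}^2 \leq \Frobg_{2\xiv}^2$, while $3\lambda_{1\xiv}^2 > \Frobg_{1\xiv}^2 \geq \lambda_{1\xiv}^2 + \Frobg_{2\xiv}^2$ gives $\lambda_{1\xiv}^2 > \tfrac{1}{2}\Frobg_{2\xiv}^2$, hence $(\lambda_{1\xiv}\Frobg_{2\xiv})^{-1/2} \lesssim (\Frobg_{1\xiv}\Frobg_{2\xiv})^{-1/2}$ after noting $\lambda_{1\xiv}^2 \leq \Frobg_{1\xiv}^2 \leq 3\lambda_{1\xiv}^2$ so that $\lambda_{1\xiv} \asymp \Frobg_{1\xiv}$. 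In the third regime the same reasoning gives $\lambda_{2\xiv} \asymp \Frobg_{2\xiv}$, so $(\lambda_{1\xiv}\lambda_{2\xiv})^{-1/2} \asymp (\Frobg_{1\xiv}\Frobg_{2\xiv})^{-1/2}$. Substituting these estimates and their counterparts for $\etav$ into Theorem~\ref{l: explicit gaussian comparison} yields \eqref{expl_gauss22}.

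For the second bound \eqref{expl_gauss 2}, observe that $\|\Sigma_{\xiv}\| = \lambda_{1\xiv}$ and $\|\Sigma_{\xiv}\|_{\Fr}^2 = \Frobg_{1\xiv}^2$, so the assumption $3\|\Sigma_{\xiv}\|^2 \leq \|\Sigma_{\xiv}\|_{\Fr}^2$ is exactly the hypothesis defining the first regime of \eqref{CdelSxiSeta}. Hence $\CONSTdlt(\Sigma_{\xiv}) = \Frobg_{1\xiv}^{-1} = \|\Sigma_{\xiv}\|_{\Fr}^{-1}$, and analogously $\CONSTdlt(\Sigma_{\etav}) = \|\Sigma_{\etav}\|_{\Fr}^{-1}$. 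Plugging these identities into the bound of Theorem~\ref{l: explicit gaussian comparison} gives \eqref{expl_gauss 2}.

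There is no real obstacle here: the conceptual work sits entirely inside Theorem~\ref{l: explicit gaussian comparison}, and the corollary is a bookkeeping exercise in \eqref{CdelSxiSeta}. The only place that requires a moment of care is the second regime for \eqref{expl_gauss22}, where one must combine the defining inequalities to recover $\lambda_{1\xiv} \asymp \Frobg_{1\xiv}$ before trading $\lambda_{1\xiv}^{-1/2}$ for $\Frobg_{1\xiv}^{-1/2}$ up to an absolute constant.
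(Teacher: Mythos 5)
Your proposal is correct and matches the paper's intent: the paper itself asserts the two-sided estimate $\CONSTdlt(\Sigma_{\xiv}) \asymp (\Frobg_{1\xiv}\Frobg_{2\xiv})^{-1/2}$ just before Theorem~\ref{l: explicit gaussian comparison} and treats the corollary as an immediate consequence, which is exactly the bookkeeping you carry out. The case-by-case verification you give (in particular noting $\lambda_{1\xiv} \asymp \Frobg_{1\xiv}$ in regimes two and three, and that the hypothesis \eqref{2plusdelta} places $\Sigma_{\xiv}$ in the first regime so that $\CONSTdlt(\Sigma_{\xiv}) = \|\Sigma_{\xiv}\|_{\Fr}^{-1}$) is precisely the short computation the authors leave to the reader.
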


We complement the result of Theorem~\ref{l: explicit gaussian comparison} and Corollary~\ref{Tgaussiancomparison3} with several additional remarks. The first remark is that  by the Weilandt--Hoffman inequality, 
\( \| \lambdav_{\xiv} - \lambdav_{\etav} \|_{1} \le \| \Sigma_{\xiv} - \Sigma_{\etav}\|_{1} \), see e.g.~\cite{MarkusEng}.
This yields the bound in terms of the nuclear norm of 
the difference \( \Sigma_{\xiv} - \Sigma_{\etav} \), which may be more useful in a number of applications.   

\begin{corollary}\label{cor 0} 
	Under conditions of Theorem~\ref{l: explicit gaussian comparison} we have
	\begin{EQA}
		\sup_{x > 0} 
		\left|\P\bigl( \| \xiv - \av \| \leq x) - \P( \| \etav \| \leq x\bigr) \right| 
		& \lesssim &
		\Bigl\{ \CONSTdlt(\Sigma_{\xiv}) + \CONSTdlt(\Sigma_{\etav}) \Bigr\}
		\Bigl( \| \Sigma_{\xiv} - \Sigma_{\etav}  \|_{1} + \| \av\|^{2}\Bigr).
	\end{EQA}
\end{corollary}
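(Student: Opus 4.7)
The plan is to read off this corollary as a one-line consequence of Theorem~\ref{l: explicit gaussian comparison}, since the only quantity on the right-hand side of \eqref{expl_gauss} that is not already phrased operator-theoretically is the \(\ell^{1}\) distance between the ordered eigenvalue sequences \(\lambdav_{\xiv}\) and \(\lambdav_{\etav}\). All of the work is therefore concentrated in replacing \(\|\lambdav_{\xiv}-\lambdav_{\etav}\|_{1}\) by \(\|\Sigma_{\xiv}-\Sigma_{\etav}\|_{1}\), after which the coefficient \(\CONSTdlt(\Sigma_{\xiv})+\CONSTdlt(\Sigma_{\etav})\) and the shift term \(\|\av\|^{2}\) simply pass through.

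The key analytic ingredient is the Hoffman--Wielandt / Lidskii--Mirsky inequality for self-adjoint compact operators: if \(A\) and \(B\) are self-adjoint trace-class operators on a separable Hilbert space with eigenvalues \(\alpha_{1}\geq\alpha_{2}\geq\cdots\) and \(\beta_{1}\geq\beta_{2}\geq\cdots\) arranged in non-increasing order, then
\begin{equation*}
    \sum_{j\ge 1}|\alpha_{j}-\beta_{j}| \;\le\; \|A-B\|_{1}.
\end{equation*}
Applied to \(A=\Sigma_{\xiv}\) and \(B=\Sigma_{\etav}\), both of which are self-adjoint, positive, and nuclear by assumption, and whose eigenvalues are ordered exactly as required by the definition of \(\lambdav_{\xiv}\) and \(\lambdav_{\etav}\), this reads
\begin{equation*}
    \|\lambdav_{\xiv}-\lambdav_{\etav}\|_{1} \;=\; \sum_{j\ge 1}|\lambda_{j\xiv}-\lambda_{j\etav}| \;\le\; \|\Sigma_{\xiv}-\Sigma_{\etav}\|_{1}.
\end{equation*}
The validity of this inequality in the infinite-dimensional setting is classical; a reference suitable to cite is~\cite{MarkusEng}, as already indicated just before the statement of the corollary.

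Plugging this estimate into the right-hand side of \eqref{expl_gauss} of Theorem~\ref{l: explicit gaussian comparison} yields the advertised bound verbatim. There is no genuine obstacle here: the only point that deserves attention is to note that the inequality is applied to self-adjoint trace-class operators with eigenvalues sorted consistently, which is exactly the standing setup. The corollary is thus an immediate operator-theoretic repackaging of Theorem~\ref{l: explicit gaussian comparison}.
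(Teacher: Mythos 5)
Your proposal matches the paper's own argument exactly: the corollary follows from Theorem~\ref{l: explicit gaussian comparison} by one application of the Hoffman--Wielandt inequality \(\|\lambdav_{\xiv}-\lambdav_{\etav}\|_{1}\le\|\Sigma_{\xiv}-\Sigma_{\etav}\|_{1}\), which is precisely what the paper invokes (citing \cite{MarkusEng}) in the sentence introducing Corollary~\ref{cor 0}. No differences in approach.
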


The right-hand-side of~~\eqref{expl_gauss} does not change if we exchange \( \xiv \) and \( \etav \) in Theorem~\ref{l: explicit gaussian comparison} and its Corollaries hold for the balls with the same shift \( \av \). 
In particular, the following corollary is true.

\begin{corollary}\label{cor 10}
	Under conditions of Theorem~\ref{l: explicit gaussian comparison} we have 
	\begin{EQA}
		\label{expl_gauss_same_a}
		\sup_{x > 0} \Bigl|\P( \| \xiv - \av \| \leq x) - \P( \| \etav - \av \| \leq x) \Bigr| 
		& \lesssim &
		\Bigl\{ \CONSTdlt(\Sigma_{\xiv}) + \CONSTdlt(\Sigma_{\etav}) \Bigr\} 
		\Bigl( \| \lambdav_{\xiv} - \lambdav_{\etav}  \|_{1} + \| \av\|^{2}\Bigr).
	\end{EQA}	
\end{corollary}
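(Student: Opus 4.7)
The plan is to reduce the statement to two applications of Theorem~\ref{l: explicit gaussian comparison} glued together by the triangle inequality for the supremum. The issue is that Theorem~\ref{l: explicit gaussian comparison} compares \( \|\xiv - \av\| \) with the \emph{centered} norm \( \|\etav\| \), whereas here both balls are shifted by the same \( \av \). So the natural strategy is to bridge through the centered ball around \( \etav \).

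\medskip

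\textbf{Step 1: Bridge via the centered ball.} For every \( x > 0 \) write
\begin{EQA}
\Bigl| \P(\|\xiv - \av\| \le x) - \P(\|\etav - \av\| \le x)\Bigr|
& \leq &
\Bigl| \P(\|\xiv - \av\| \le x) - \P(\|\etav\| \le x)\Bigr|
\\ && +\
\Bigl| \P(\|\etav\| \le x) - \P(\|\etav - \av\| \le x)\Bigr|.
\end{EQA}
Taking the supremum over \( x > 0 \) on both sides reduces the problem to bounding each of the two terms on the right uniformly in \( x \).

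\medskip

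\textbf{Step 2: First application of Theorem~\ref{l: explicit gaussian comparison}.} For the first term I apply Theorem~\ref{l: explicit gaussian comparison} directly with the given \( \xiv \), \( \etav \) and \( \av \), which yields
\begin{EQA}
\sup_{x > 0} \Bigl| \P(\|\xiv - \av\| \le x) - \P(\|\etav\| \le x)\Bigr|
& \lesssim &
\Bigl\{ \CONSTdlt(\Sigma_{\xiv}) + \CONSTdlt(\Sigma_{\etav}) \Bigr\}
\Bigl( \| \lambdav_{\xiv} - \lambdav_{\etav} \|_{1} + \| \av\|^{2}\Bigr).
\end{EQA}

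\medskip

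\textbf{Step 3: Second application of Theorem~\ref{l: explicit gaussian comparison}.} For the second term I apply the same theorem but with both Gaussian elements chosen to be \( \etav \) (so \( \Sigma_{\xiv} = \Sigma_{\etav} \) and \( \lambdav_{\xiv} = \lambdav_{\etav} \), hence \( \| \lambdav_{\xiv} - \lambdav_{\etav} \|_{1} = 0 \)). This gives
\begin{EQA}
\sup_{x > 0} \Bigl| \P(\|\etav - \av\| \le x) - \P(\|\etav\| \le x)\Bigr|
& \lesssim &
\CONSTdlt(\Sigma_{\etav}) \, \| \av\|^{2},
\end{EQA}
which is dominated by the bound obtained in Step~2. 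Combining the two via the inequality of Step~1 yields~\eqref{expl_gauss_same_a}.

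\medskip

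There is essentially no obstacle here; the only thing worth double-checking is that the \( \| \av\|^2 \) term on the right-hand side of Theorem~\ref{l: explicit gaussian comparison} is produced uniformly in the argument \( x \), which is exactly how the theorem is stated. Thus the corollary follows with the same implicit constant (up to a factor of two) as in Theorem~\ref{l: explicit gaussian comparison}.
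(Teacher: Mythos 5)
Your proof is correct and matches the paper's (terse) justification: the paper derives the corollary by noting the symmetry of the right-hand side of \eqref{expl_gauss} under exchange of $\xiv$ and $\etav$ and invoking the triangle inequality through a centered ball, which is precisely the bridge via $\P(\|\etav\|\le x)$ that you spell out in Steps~1--3. The only thing the paper leaves implicit that you make explicit is the second application of Theorem~\ref{l: explicit gaussian comparison} with both elements equal to $\etav$, which kills the nuclear-norm term and leaves the $\|\av\|^2$ contribution.
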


The result of Theorem~\ref{l: explicit gaussian comparison} may be also rewritten in terms of the operator norm
\begin{EQA}[c]
	\| \Sigma_{\xiv}^{-1/2} \Sigma_{\etav} \Sigma_{\xiv}^{-1/2} - \Id \|.
\end{EQA}
Indeed, using the inequality \(\| \A \BB\|_{1} \le \| \A \|_{1} \| \BB\| \) we immediately obtain the following corollary.
\begin{corollary}\label{cor 1}
	Under conditions of Theorem~\ref{l: explicit gaussian comparison} we have
	\begin{EQA}
		&&	\sup_{x > 0} 
		\left|\P( \| \xiv - \av \| \leq x) - \P( \| \etav  \| \leq x) \right| 
		\\ 
		&&\qquad\qquad\qquad 
		\lesssim  
		\Bigl\{ \CONSTdlt(\Sigma_{\xiv}) + \CONSTdlt(\Sigma_{\etav}) \Bigr\} 
		\bigg( \tr \bigl( \Sigma_{\xiv} \bigr) \, 
		\| \Sigma_{\xiv}^{-1/2} \Sigma_{\etav} \Sigma_{\xiv}^{-1/2} - \Id \| 
		+ \| \av \|^{2}
		\bigg).
	\end{EQA}
\end{corollary}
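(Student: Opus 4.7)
The plan is to reduce the claim to Corollary~\ref{cor 0} via a single Schatten-norm manipulation, exactly as the remark preceding the statement suggests. Since Corollary~\ref{cor 0} already delivers a bound of the form $\bigl\{\CONSTdlt(\Sigma_{\xiv}) + \CONSTdlt(\Sigma_{\etav})\bigr\}\bigl(\| \Sigma_{\xiv} - \Sigma_{\etav}\|_{1} + \| \av\|^{2}\bigr)$, and the $\| \av\|^{2}$ term carries over unchanged, the whole task reduces to establishing
\begin{EQA}[c]
\| \Sigma_{\xiv} - \Sigma_{\etav} \|_{1} \leq \tr(\Sigma_{\xiv}) \, \bigl\| \Sigma_{\xiv}^{-1/2} \Sigma_{\etav} \Sigma_{\xiv}^{-1/2} - \Id \bigr\|.
\end{EQA}

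First I would set $M \eqdef \Sigma_{\xiv}^{-1/2} \Sigma_{\etav} \Sigma_{\xiv}^{-1/2} - \Id$, which is self-adjoint and satisfies the identity $\Sigma_{\xiv}^{1/2} M \Sigma_{\xiv}^{1/2} = \Sigma_{\etav} - \Sigma_{\xiv}$. Next I would invoke the cyclic identity $\| UV \|_{1} = \| VU \|_{1}$ for the Schatten-one norm---which is valid because the nonzero singular values of $UV$ and $VU$ coincide---with $U = \Sigma_{\xiv}^{1/2}$ and $V = M \Sigma_{\xiv}^{1/2}$ to collapse the two symmetric factors into one:
\begin{EQA}[c]
\| \Sigma_{\etav} - \Sigma_{\xiv} \|_{1} = \bigl\| \Sigma_{\xiv}^{1/2} M \Sigma_{\xiv}^{1/2} \bigr\|_{1} = \| M \Sigma_{\xiv} \|_{1}.
\end{EQA}
Finally, applying the submultiplicativity $\| AB \|_{1} \leq \| A \| \, \| B \|_{1}$ highlighted before the statement with $A = M$ and $B = \Sigma_{\xiv}$ gives $\| M \Sigma_{\xiv} \|_{1} \leq \| M \| \, \tr(\Sigma_{\xiv})$, which is exactly the inequality needed. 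Plugging this back into Corollary~\ref{cor 0} completes the argument.

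The only technical caveat is that $\Sigma_{\xiv}^{-1/2}$ requires $\Sigma_{\xiv}$ to have trivial kernel; in the singular case the right-hand side of the claimed bound should be interpreted as $+\infty$ and the inequality is vacuous, so nothing is lost. I do not foresee any genuine obstacle here: the entire derivation is a one-line Schatten-norm computation, and the authors themselves advertise the corollary as an immediate consequence of Corollary~\ref{cor 0} combined with the stated submultiplicativity of the nuclear norm with respect to the operator norm.
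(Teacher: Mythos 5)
Your overall plan is the right one and matches the paper's: feed Corollary~\ref{cor 0} and reduce everything to the single estimate $\| \Sigma_{\xiv} - \Sigma_{\etav}\|_{1} \leq \tr(\Sigma_{\xiv})\,\|\Sigma_{\xiv}^{-1/2}\Sigma_{\etav}\Sigma_{\xiv}^{-1/2} - \Id\|$, via the factorization $\Sigma_{\etav} - \Sigma_{\xiv} = \Sigma_{\xiv}^{1/2} M \Sigma_{\xiv}^{1/2}$ with $M = \Sigma_{\xiv}^{-1/2}\Sigma_{\etav}\Sigma_{\xiv}^{-1/2} - \Id$, followed by the submultiplicativity $\|AB\|_1 \le \|A\|\,\|B\|_1$. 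That is exactly what the remark preceding the corollary advertises.

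However, the step you call a ``cyclic identity'' is not an identity, and the justification you give for it is incorrect. It is \emph{not} true that the nonzero \emph{singular} values of $UV$ and $VU$ coincide; what coincides are the nonzero \emph{eigenvalues}. A concrete counterexample to your claimed equality: with $\Sigma_{\xiv} = \operatorname{diag}(1,0)$ and $M$ the $2\times 2$ flip matrix, $\Sigma_{\xiv}^{1/2}M\Sigma_{\xiv}^{1/2} = 0$, so $\|\Sigma_{\xiv}^{1/2}M\Sigma_{\xiv}^{1/2}\|_1 = 0$, while $M\Sigma_{\xiv}$ is rank one with $\|M\Sigma_{\xiv}\|_1 = 1$. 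Fortunately you only need the inequality $\|\Sigma_{\xiv}^{1/2}M\Sigma_{\xiv}^{1/2}\|_1 \leq \|M\Sigma_{\xiv}\|_1$, and that direction does hold: $\Sigma_{\xiv}^{1/2}M\Sigma_{\xiv}^{1/2}$ is self-adjoint, so its nuclear norm equals the sum of the absolute values of its eigenvalues; these nonzero eigenvalues coincide with those of $M\Sigma_{\xiv}$; and by Weyl's majorant inequality the sum of $|\text{eigenvalues}|$ of any compact operator is bounded by the sum of its singular values, i.e.\ by its nuclear norm. So replace ``$=$'' by ``$\le$'' with this justification, and the rest of your argument is sound. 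An alternative that bypasses the cyclic step entirely: $\|\Sigma_{\xiv}^{1/2}M\Sigma_{\xiv}^{1/2}\|_1 \leq \|\Sigma_{\xiv}^{1/2}\|_2\,\|M\Sigma_{\xiv}^{1/2}\|_2 \leq \|M\|\,\|\Sigma_{\xiv}^{1/2}\|_2^2 = \|M\|\,\tr\Sigma_{\xiv}$ using the Cauchy--Schwarz inequality for the Hilbert--Schmidt norm.
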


We now discuss the origin of the value \( \CONSTdlt(\Sigma_{\xiv}) \) which appears in the main theorem and its corollaries. 
Analysing the proof of Theorem~\ref{l: explicit gaussian comparison} one may find out that it is necessary to get an upper bound for a probability density function (p.d.f.) \(p_{\xiv}(x)\) (resp. \(p_{\etav}(x)\)) of  \(\| \xiv\|^{2}\) (resp. \(\| \etav \|^{2}\)) and the more general p.d.f. \(p_{\xiv}(x, \av)\) of \(\| \xiv - \av\|^{2}\) for all  \(\av\in \HM \). 
The same arguments remain true for \(p_{\etav}(x)\).  The following theorem provides uniform bounds.
\begin{theorem}\label{l: density est 2}
	Let \(\xiv\) be a Gaussian element in \(\HM\) with zero mean and covariance operator \(\Sigma_{\xiv}\). 
	Then it holds for any \(\av\) that
	\begin{EQA}[c]
		\label{density bound 0}
		\sup_{x \geq 0} p_{\xiv}(x, \av) 
		\lesssim  
		\CONSTdlt(\Sigma_{\xiv})
	\end{EQA}
	with \( \CONSTdlt(\Sigma_{\xiv}) \) from \eqref{CdelSxiSeta}.
	In particular, \( \CONSTdlt(\Sigma_{\xiv}) \lesssim (\Frobg_{1\xiv}\Frobg_{2\xiv})^{-1/2} \).
\end{theorem}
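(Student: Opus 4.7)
The plan is to diagonalize $\Sigma_{\xiv}$ and reduce to a weighted sum of non-central chi-squares, then handle the three branches of $\CONSTdlt(\Sigma_{\xiv})$ by combining two elementary tools. Writing $\xiv = \sum_{j \geq 1}\sqrt{\lambda_{j\xiv}}\, Z_j\, e_j$ in the eigenbasis with i.i.d.\ standard Gaussians $Z_j$ gives
\[
	\|\xiv - \av\|^2 = \sum_{j \geq 1} \lambda_{j\xiv}(Z_j - b_j)^2,
	\qquad b_j = \langle \av, e_j\rangle/\sqrt{\lambda_{j\xiv}}.
\]
Two estimates (both uniform in $b_j$) do most of the work: the pointwise bound $p_{\lambda(Z-b)^2}(y)\le (2\pi\lambda y)^{-1/2}$, obtained from $e^{-(\sqrt{y}\pm b)^2/2}\le 1$, and the characteristic function bound $|\phi_{\lambda(Z-b)^2}(t)|\le(1+4\lambda^2 t^2)^{-1/4}$, obtained by completing the square in $E e^{it\lambda(Z-b)^2}$. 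The latter multiplies over $j$ to give $|\phi_{\|\xiv-\av\|^2}(t)|\le \prod_{j\ge 1}(1+4\lambda_{j\xiv}^2 t^2)^{-1/4}$.

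For the third branch ($3\lambda_{1\xiv}^2>\Frobg_{1\xiv}^2$ and $3\lambda_{2\xiv}^2>\Frobg_{2\xiv}^2$) isolate $A=\lambda_{1\xiv}(Z_1-b_1)^2+\lambda_{2\xiv}(Z_2-b_2)^2$ and denote by $R$ the independent remainder. The convolution formula combined with the Beta integral $\int_0^x[u(x-u)]^{-1/2}du=\pi$ yields $p_A(x)\le (2\sqrt{\lambda_{1\xiv}\lambda_{2\xiv}})^{-1}$ for every $x$, and conditioning on $R$ transfers this uniform bound to $\sup_x p_{\xiv}(x,\av)$. The first branch ($3\lambda_{1\xiv}^2\le \Frobg_{1\xiv}^2$) is handled by Fourier inversion $\sup_x p_{\xiv}(x,\av)\le \frac{1}{2\pi}\int|\phi|\,dt$: for $|t|\le 1/(2\lambda_{1\xiv})$ every $4\lambda_{j\xiv}^2 t^2\le 1$, so $\log(1+x)\ge x/2$ on $[0,1]$ gives $|\phi|\le \exp(-\Frobg_{1\xiv}^2 t^2/2)$, integrating to $\sqrt{2\pi}/\Frobg_{1\xiv}$; on the complementary range the spread condition $\Frobg_{1\xiv}^2\ge 3\lambda_{1\xiv}^2$ guarantees enough further eigenvalues to produce polynomial decay, controlled by splitting according to which $\lambda_{j\xiv}$ exceed $1/(2|t|)$.

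For the intermediate second branch, apply the first-branch Fourier estimate to the tail $T=\sum_{j\ge 2}\lambda_{j\xiv}(Z_j-b_j)^2$, whose spectrum automatically satisfies the first-branch hypothesis by the second-branch assumption $3\lambda_{2\xiv}^2\le \Frobg_{2\xiv}^2$, yielding $\sup p_T\lesssim 1/\Frobg_{2\xiv}$. Writing $\|\xiv-\av\|^2=\lambda_{1\xiv}(Z_1-b_1)^2+T$ and convolving gives
\[
	p_{\xiv}(x,\av)\le \int_0^x \frac{1}{\sqrt{2\pi\lambda_{1\xiv}(x-u)}}\, p_T(u)\,du,
\]
which I would split at $u=x-\Frobg_{2\xiv}$: on $u<x-\Frobg_{2\xiv}$ the singular kernel is $\le (2\pi\lambda_{1\xiv}\Frobg_{2\xiv})^{-1/2}$ and $\int p_T\le 1$, while on $u\ge x-\Frobg_{2\xiv}$ use $p_T\le C/\Frobg_{2\xiv}$ and integrate $(x-u)^{-1/2}$ to pick up an extra $\sqrt{\Frobg_{2\xiv}}$. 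Both pieces give $\lesssim(\lambda_{1\xiv}\Frobg_{2\xiv})^{-1/2}$.

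The main technical obstacle is the Fourier estimate in the first branch: showing $\int\prod_j(1+4\lambda_{j\xiv}^2 t^2)^{-1/4}dt\lesssim 1/\Frobg_{1\xiv}$ under $3\lambda_{1\xiv}^2\le \Frobg_{1\xiv}^2$ is subtle because for quite flat spectra many eigenvalues at widely different scales must be tracked simultaneously, and a naive truncation to the first three factors does not suffice once $\lambda_{3\xiv}$ is small. The other two branches then follow as corollaries via the elementary convolution arguments above, and the closing assertion $\CONSTdlt(\Sigma_{\xiv})\lesssim (\Frobg_{1\xiv}\Frobg_{2\xiv})^{-1/2}$ reduces to a case-by-case comparison using $\Frobg_{2\xiv}^2=\Frobg_{1\xiv}^2-\lambda_{1\xiv}^2$ and the ordering $\lambda_{j\xiv}\le \Frobg_{j\xiv}$.
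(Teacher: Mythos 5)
Your three-case split, the two elementary estimates (the pointwise bound $g_j(x)\le(2\pi x\lambda_j)^{-1/2}$ and the characteristic-function bound $|f_j(t)|\le(1+4\lambda_j^2t^2)^{-1/4}$), the Beta-integral convolution for the ``two eigenvalue'' case, and the spike decomposition $\|\xiv-\av\|^2=\lambda_{1\xiv}(Z_1-b_1)^2+T$ followed by splitting the convolution at $u=x-\Frobg_{2\xiv}$ all match the paper's argument; your treatment of the spike case is in fact a bit more explicit than the paper's, which leaves that last splitting step to the reader.

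The genuine gap is exactly where you point to it: you never establish
\begin{EQA}[c]
\int_{-\infty}^{\infty}\prod_{j\ge1}(1+4\lambda_{j\xiv}^2t^2)^{-1/4}\,dt\ \lesssim\ \frac{1}{\Frobg_{1\xiv}}
\qquad\text{when } 3\lambda_{1\xiv}^2\le\Frobg_{1\xiv}^2 .
\end{EQA}
Your small-$|t|$ Gaussian estimate via $\log(1+x)\ge x/2$ on $[0,1]$ is fine, but the ``split according to which $\lambda_{j\xiv}$ exceed $1/(2|t|)$'' sketch for $|t|>1/(2\lambda_{1\xiv})$ is not a proof, and indeed cannot be salvaged by truncating to a fixed finite set of factors: with $\lambda_{1\xiv}=1$ and $\lambda_{2\xiv}=\dots=\lambda_{N+1,\xiv}=\varepsilon$ chosen so that $N\varepsilon^2=2$ (so $\Frobg_{1\xiv}=\sqrt3$ is fixed and the case-1 hypothesis holds with equality), the integral of the first $k$ factors alone diverges as $\varepsilon\downarrow0$ for any fixed $k$, and only the full infinite product, aggregating all the small eigenvalues into the Gaussian decay, stays bounded. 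The paper closes this with Lemma~\ref{Lprodcharf}: apply H\"older with exponents $\qq_j\ge3$, $\sum_j\qq_j^{-1}=1$, chosen by the balancing condition $\lambda_j^2(\qq_j/4-1/2)=\tau$; then each factor contributes $\bigl(\lambda_j\Inta(\qq_j/4-1/2)\bigr)^{-1/\qq_j}\asymp\tau^{-1/(2\qq_j)}$ by Lemma~\ref{Lintdelttm1}, the product telescopes to $\tau^{-1/2}$, and the constraint $3\lambda_{1\xiv}^2\le\Frobg_{1\xiv}^2$ forces $\tau\gtrsim\Frobg_{1\xiv}^2$. Without an argument of this type (or an equivalent aggregation of the entire tail, as you do correctly in the spike case by normalizing $w_j=\lambda_j^2/\Frobg_{2\xiv}^2$), the first branch is not proved. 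Note also that the same lemma is reused in your ``second branch'' to show $\sup p_T\lesssim1/\Frobg_{2\xiv}$, so the gap propagates there too.
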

The proof of this theorem will be given in  Section~\ref{SproofsGC}. 

Since \( \xiv \myeq \sum_{j=1}^{\infty} \sqrt{\lambda_{j\xiv}} Z_{j} \ee_{j\xiv}\), we obtain that \( \| \xiv\|^{2} \myeq \sum_{j=1}^{\infty} \lambda_{j \xiv} Z_{j}^{2} \). 
Here and in what follows \(\{\ee_{j\xiv}\ \}_{j=1}^{\infty}\) is the orthonormal basis formed by the eigenvectors of \(\Sigma_{\xiv}\) corresponding to \(\{\lambda_{1\xiv}\}_{j=1}^{\infty}\). 
In the case \(\HM = \R^{\dimp}\), \( \av = 0, \Sigma_{\xiv} \asymp \Id\) one has that 
the distribution of \( \| \xiv\|^{2} \) is close to standard \(\chi^{2}\) with 
\( \dimp \) degrees of freedom and 
\begin{EQA}[c]\label{identity case}
	\sup_{x \geq 0} p_{\xiv}(x, 0) \asymp p^{-1/2}.
\end{EQA} 
Hence, the bound~\eqref{density bound 0} gives the right dependence on \(\dimp\) because \(\CONSTdlt(\Sigma_{\xiv}) \asymp p^{-1/2}\).   
However, a lower bound for \(\sup_{x \geq 0} p_{\xiv}(x, \av) \) in the general case is still an open question. 
Another possible extension is a non-uniform upper bound for the p.d.f. of 
\( \| \xiv - \av\|^{2} \). 
In this direction for any \(\lambda > \lambda_{1 \xiv}\) we can prove that 
\begin{EQA}[c]
	p_{\xiv}(x,\av) 
	\leq 
	\frac{ \exp \bigl(-(x^{1/2} - \| \av\|)^{2}/(2\,\lambda)\bigr)}
	{\sqrt{2\lambda_{1\xiv}\lambda_{2\xiv}}} 
	\prod_{j=3}^{\infty}\,(1-\lambda_{j\xiv}/\lambda)^{-1/2}; 
\end{EQA}
see Lemma~\ref{l: density est} and remark after it in the Appendix. It is still an open question whether it is possible to replace the \(\lambda_{k\xiv}\)'s in the denominator by \( \Frobg_{k\xiv} \), 
\( k=1, 2\).

A direct corollary of Theorem~\ref{l: density est 2} is the following theorem which states for a rather general situation a dimension-free anti-concentration inequality for the squared norm of a Gaussian element \( \xiv \).
In the ``high dimensional situation'', this anti-concentration bound 
only involves the Frobenius norm of \(\Sigma_{\xiv}\). 

\begin{theorem}[\( \eps\)-band of the squared norm of a Gaussian element]
	\label{band of GE}
	Let \(\xiv\) be a Gaussian element in \(\HM\) with zero mean and a covariance operator \(\Sigma_{\xiv}\). 
	Then for arbitrary \(\eps > 0\), one has
	\begin{EQA}[c]
		\label{band of Gaussian2}
		\sup_{x  > 0} \P(x < \| \xiv - \av \|^{2} < x + \eps)  
		\lesssim  
		\CONSTdlt(\Sigma_{\xiv}) \, \eps
	\end{EQA}	
	with \( \CONSTdlt(\Sigma_{\xiv}) \) from \eqref{CdelSxiSeta}.
	In particular, \(\CONSTdlt(\Sigma_{\xiv}) \) can be replaced by 
	\( (\Frobg_{1\xiv} \, \Frobg_{2\xiv})^{-1/2} \).
\end{theorem}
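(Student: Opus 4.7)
The plan is to observe that the statement follows immediately from the uniform density bound in Theorem~\ref{l: density est 2}, which provides the essential analytic input. Since $\CONSTdlt(\Sigma_{\xiv})$ is finite only when at least two eigenvalues of $\Sigma_{\xiv}$ are positive (through the factor $\lambda_{2\xiv}$ or $\Frobg_{2\xiv}$), the random variable $\|\xiv - \av\|^{2}$ admits a probability density $p_{\xiv}(\cdot, \av)$ on $(0, \infty)$; this follows because $\|\xiv - \av\|^{2}$ can be written in terms of a weighted sum of independent non-central $\chi^{2}_{1}$ variables, and a sum of at least two such atomless distributions is absolutely continuous. For any $x > 0$ and $\eps > 0$, one then writes
\begin{EQA}
\P\bigl( x < \| \xiv - \av \|^{2} < x + \eps \bigr)
&=& \int_{x}^{x+\eps} p_{\xiv}(t, \av) \, dt
\leq \eps \cdot \sup_{t \geq 0} p_{\xiv}(t, \av) .
\end{EQA}
Applying Theorem~\ref{l: density est 2} to the right-hand side bounds the supremum by a constant multiple of $\CONSTdlt(\Sigma_{\xiv})$, yielding \eqref{band of Gaussian2} after taking the supremum over $x > 0$.

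For the second assertion, the replacement of $\CONSTdlt(\Sigma_{\xiv})$ by $(\Frobg_{1\xiv}\Frobg_{2\xiv})^{-1/2}$ is obtained directly from the elementary two-sided inequality
\begin{EQA}
\CONSTdlt(\Sigma_{\xiv}) &\leq& \frac{1.8}{(\Frobg_{1\xiv}\Frobg_{2\xiv})^{1/2}},
\end{EQA}
already recorded in Section~\ref{SmainresGC}; in particular this replacement is valid in all three cases of the piecewise definition \eqref{CdelSxiSeta} simultaneously. The main obstacle does not lie in this corollary itself, since the derivation is a one-line integration bound; all of the substantive work is absorbed into Theorem~\ref{l: density est 2}. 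The only genuine subtlety to flag in the write-up is the existence of the density, which one may either justify by the convolution argument sketched above or simply subsume into the standing assumption that $\CONSTdlt(\Sigma_{\xiv})$ is finite.
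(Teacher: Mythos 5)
Your proposal is correct and follows exactly the route the paper intends: the paper explicitly introduces Theorem~\ref{band of GE} as ``a direct corollary of Theorem~\ref{l: density est 2}'' and gives no separate proof, and your one-line integration of the uniform density bound over $(x,x+\eps)$ is precisely that corollary. Your remarks on density existence and on the replacement $\CONSTdlt(\Sigma_{\xiv})\lesssim (\Frobg_{1\xiv}\Frobg_{2\xiv})^{-1/2}$ are accurate and match what the paper records in Section~\ref{SmainresGC}.
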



We finish this section showing that the structure of estimates in Theorem~\ref{l: explicit gaussian comparison} and Theorem~\ref{band of GE} is the right one. 

For simplicity, we consider the case of centred ball, i.e. \( \av = 0 \) and denote  \(\CONSTdlt(\Sigma_{\xiv},  \Sigma_{\etav}) \eqdef \max\{\CONSTdlt(\Sigma_{\xiv}), \CONSTdlt(\Sigma_{\etav} )\} \). We show in the special case that
\begin{EQA}[c]
	\limsup \bigg(\sup_{x > 0} \frac{\left|\P( \| \xiv\| \leq x) - \P( \| \etav\| \leq x) \right|}{\CONSTdlt(\Sigma_{\xiv},  \Sigma_{\etav})\| \Sigma_{\xiv} -\Sigma_{\etav} \|_{1}}\bigg)  \geq \CONST_1,
	\label{low bound}
\end{EQA}
where \(\CONST_1\) is some absolute positive constant and \(\limsup\) is taken w.r.t. \(\max(\lambda_{2\xiv}, \lambda_{2\etav}) \downarrow 0\). Hence, in general it is impossible to obtain the upper bound in Theorem~\ref{l: explicit gaussian comparison}, such that it doesn't tend to infinity when \(\lambda_{2\xiv}\) (or \( \lambda_{2\etav} \)) tends to zero. To show~\eqref{low bound} we construct the following example.  Let \( \xiv \) be a Gaussian vector in \( \R^{3}\) with zero mean and covariance matrix \( \Sigma_{\xiv} = \diag(\lambda_{1\xiv}, \lambda_{2\xiv}, \lambda_{3\xiv})\).  Similarly, let \( \etav\) be a Gaussian vector with zero mean and covariance matrix \( \Sigma_{\etav} = \diag(\lambda_{1\etav}, \lambda_{2\etav}, \lambda_{3\etav})\).  Then
\begin{EQA}[c]
	\sup_{x > 0} \left|\P( \| \xiv\| \leq x) - \P( \| \etav\| \leq x) \right|  \geq \left|\P( \| \xiv \| \leq \sqrt R) - \P( \| \etav\| \leq \sqrt R) \right|, 
\end{EQA}
for some \(R\) which will be chosen later.  Put 
\begin{EQA}[c]
	\mathcal E_{1} \eqdef \Bigl \{  (x_{1}, x_{2}, x_{3}) \in \R^{3}: \sum_{j=1}^{3}\lambda_{j\xiv} x_{j}^{2} \le R  \Bigr \}, \quad \mathcal E_{2} \eqdef \Bigl \{  (x_{1}, x_{2}, x_{3}) \in \R^{3}: \sum_{j=1}^{3}\lambda_{j\etav} x_{j}^{2} \le R  \Bigr \}.
\end{EQA}
Let us take \( \lambda_{1\xiv} = \lambda_{1\etav},  \lambda_{2\xiv} = \lambda_{2\etav}, \lambda_{3\etav} = \lambda_{3\xiv} (1 + \varepsilon) \) for some \(0< \varepsilon < 1\). This choice gives \( \| \Sigma_{\xiv} - \Sigma_{\etav}\|_{1} = \varepsilon \lambda_{3}\) and\( \CONSTdlt(\Sigma_{\xiv},  \Sigma_{\etav}) \asymp (\lambda_{1\xiv} \lambda_{2\xiv})^{-1/2} \). It is straightforward to check that
\begin{EQA}
	\left|\P( \| \xiv \| \leq R) - \P( \| \etav\| \leq R) \right| &=& \frac{1}{(2 \pi)^{3/2}} \int_{\mathcal E_{1} \setminus \mathcal E_{2}} \exp \Bigl( -\frac{x_{1}^{2} + x_{2}^{2} + x_{3}^{2}}{2} \Bigr) \, d x_{1} \, d x_{2} \, d x_{3} \\
	&\geq& \frac{1}{(2 \pi)^{3/2}} (|\mathcal E_{1}| - |\mathcal E_{2}|) \exp\Bigl[ - \frac{R}{2}  \Bigl( \frac{1}{\lambda_{1\xiv}} + \frac{1}{\lambda_{2\xiv}} + \frac{1}{\lambda_{3\xiv}}  \Bigr) \Bigr],
\end{EQA}
where \(|\mathcal E_{i}| \) is a volume of the ellipsoid \(|\mathcal E_{i}|,\, i=1, 2. \) Applying formula for the volume of an ellipsoid we obtain
\begin{EQA}[c]
	|\mathcal E_{1}| - |\mathcal E_{2}| = \frac{4 \pi R^{3/2} \| \Sigma_{\xiv} -\Sigma_{\etav} \|_{1} }{3 \sqrt{ \lambda_{1\xiv} \lambda_{2\xiv}} \lambda_{3\xiv}^{3/2} \sqrt{1+\varepsilon} (1 + \sqrt{1 + \varepsilon}) } > \frac{\pi \| \Sigma_{\xiv} -\Sigma_{\etav} \|_{1} }{ \sqrt{\lambda_{1\xiv}  \lambda_{2\xiv}}} \left(\frac{ R}{2\lambda_{3\xiv}} \right)^{3/2}. 
\end{EQA}
We take \( R = 2 \lambda_{3\xiv}\). Then
\begin{EQA}[c]
	\left(\frac{R}{2\lambda_{3\xiv}} \right)^{3/2} \exp\Bigl( - \frac{R}{2 \lambda_{3\xiv}}  \Bigr) \geq e^{-1} \geq \frac{1}{3}.
\end{EQA}
Hence,
\begin{EQA}
	\left|\P( \| \xiv \| \leq \sqrt R) - \P( \| \etav\| \leq \sqrt R) \right| &\geq& \frac{\| \Sigma_{\xiv} -\Sigma_{\etav} \|_{1}}{16  \sqrt{\lambda_{1\xiv}\lambda_{2\xiv}}} \exp\Bigl[ -  \Bigl(\frac{ \lambda_{3\xiv}}{\lambda_{1\xiv}}  + \frac{\lambda_{3\xiv}}{\lambda_{2\xiv}}  \Bigr) \Bigr]
	\geq \CONST_1 \frac{\| \Sigma_{\xiv} -\Sigma_{\etav} \|_{1}}{ \sqrt{\lambda_{1\xiv} \lambda_{2\xiv}}},
\end{EQA}
where \( \CONST_1 \eqdef  \exp\bigl( -2  \bigr)/16 \). From the last inequality we may conclude~\eqref{low bound}.

We now turn to the case \(\HM = \R^1\). Here, one may get a two-sided inequality. First, we derive an upper bound. Let \( \xiv \) and \(\etav\) be normal variables with zero mean and variances \( \lambda_{\xiv}\) and \(\lambda_{\etav}\) resp. Without loss of generality we may assume that \( \lambda_{\xiv} < \lambda_{\etav} \). Then
\begin{EQA}
	&& \nquad
	\sup_{x > 0} \left|\P( \| \xiv\| \leq x) - \P( \| \etav\| \leq x) \right|  
	=
	\frac{2}{\sqrt{2\pi}} \sup_{x > 0} 
	\int_{x/\sqrt{\lambda_{\etav}}}^{x/\sqrt{\lambda_{\xiv}}} e^{-y^{2}/2} \, dy 
	\\
	& \leq &  
	\frac{\| \Sigma_{\xiv} - \Sigma_{\etav}\|_{1}}{ \sqrt{ \lambda_{\etav} \lambda_{\xiv}} (\sqrt \lambda_{\xiv} + \sqrt \lambda_{\etav})}  
	\sup_{x > 0}  \left( x \exp \bigl( -x^{2}/(2\lambda_{\etav}) \bigr) \right) 
	\lesssim
	\frac{ \| \Sigma_{\xiv} - \Sigma_{\etav}\|_{1} }{ \lambda_{\xiv}}. 
\end{EQA}
We also have the following lower bound:  
\begin{EQA}
	&& \nquad
	\sup_{x > 0} \left|\P( \| \xiv\| \leq x) - \P( \| \etav\| \leq x) \right|  
	= 
	\frac{2}{\sqrt{2\pi}} \sup_{x > 0} 
	\int_{x/\sqrt{\lambda_{\etav}}}^{x/\sqrt{\lambda_{\xiv}}} e^{-y^{2}/2} \, dy 
	\\
	&\geq&  
	\frac{2\,\| \Sigma_{\xiv} - \Sigma_{\etav}\|_{1}\, x_{0} \exp \bigl( -x_{0}^{2}/(2\lambda_{\xiv}) \bigr) }{\sqrt{2\pi} \sqrt{ \lambda_{\etav} \lambda_{\xiv}} (\sqrt \lambda_{\xiv} + \sqrt \lambda_{\etav})}      	\gtrsim  
	\frac{ \| \Sigma_{\xiv} - \Sigma_{\etav}\|_{1} }{ \lambda_{\etav}}, 
\end{EQA}
where \( x_{0} \eqdef \sqrt{\lambda_{\xiv}}\).

Similar arguments can be applied in the case of Theorem~\ref{band of GE}. 
The right-hand side of \eqref{band of Gaussian2}
essentially depends on the first two eigenvalues of \(\Sigma_{\xiv}\). 
In general, it is impossible to get similar bounds of order \(O( \eps)\) with dependence on \(\lambda_{1\xiv}\) only.  In fact, let \( \HM = \R^2 \)
and  \(\lambda_{1\xiv} = 1\) and  \(\lambda_{2\xiv} = 0\) (i.e. \(\xiv\) has the generate Gaussian distribution). Then for all positive \(\eps \leq \log 2\) one has
\begin{EQA}[c]
	\sup_{x  > 0} \P(x < \| \xiv \|^{2} < x + \eps)  
	\geq 
	\eps^{1/2}/(2\sqrt{\pi}).
\end{EQA}

\section{Proofs of the main results}
\label{SproofsGC}

This section collects the proofs of the main results.

\begin{proof}[Proof of Theorem~\ref{l: density est 2}] 
	Let \(\{\ee_{j}\}_{j=1}^{\infty}\) be an orthonormal basis in \(\HM\) formed by the eigenvectors of \(\Sigma_{\xiv}\) corresponding to eigenvalues \(\{\lambda_{1\xiv}\}_{j=1}^{\infty}\).  	In what follows we omit the index \(\xiv\) from the notation. 
	Put \( a_{j}  \eqdef \langle a, \ee_{j}\rangle \) and \( \xi_{j} \eqdef \langle \xiv, \ee_{j} \rangle \). 
	Then \( \xi_{j} \), \( j \geq 1 \), are independent \( \ND(0, \lambda_{j})\) r.v. 
	Let \( g_{j}(x) \), \( j \geq 1,\) (resp. \(f_{j}(t)\)) be the p.d.f (resp. c.f.) of \( (\xi_{j} - a_{j})^{2} \). 
	Moreover, let \( g(m,x), m \geq 1 \) (resp. \(\bar{g}(m,x), m \geq 1\)) be the p.d.f. of \( \sum_{j=1}^{m} (\xi_{j} - a_{j})^{2} \) (resp. \( \sum_{j=m+1}^{\infty} (\xi_{j} - a_{j})^{2} \)). 
	We also introduce the c.f. \(f(m, t)\) of \(g(m,x)\).  
	As 
	\begin{EQA}[c]
		\label{reduction}
		p(x,\av) 
		\leq 
		\int_{-\infty}^{\infty} g(m,y) \, \bar{g}(m, x-y) \, dy 
		\leq 
		\sup_{x \geq 0} g(m, x),
	\end{EQA}
	we may restrict ourselves to the finite dimensional case only, e.d., \( \HM = \R^{m}\), where \( m \) is some large integer. Hence, in what follows we will assume that \(\xiv\) is a \(m\) dimensional vector. 
	
	We separately consider three cases correspondingto the definition \eqref{CdelSxiSeta} of \( \CONSTdlt(\Sigma_{\xiv}) \): 
	\begin{enumerate}[itemsep=0pt,topsep=0pt,partopsep=0pt,parsep=0pt]
		\item \( 3 \lambda_{1}^{2} \le \Frobg_{1}^{2} \);
		\item \( 3 \lambda_{1}^{2} \geq \Frobg_{1}^{2} \), 
		\( 3\lambda_{2}^{2} \geq \Frobg_{2}^{2} \);
		\item \(  3 \lambda_{1}^{2} \geq \Frobg_{1}^{2} \), 
		\( 3 \lambda_{2}^{2} \le \Frobg_{2}^{2} \).
	\end{enumerate}
	
	We start with the case 1. 
	It is straightforward to check that
	\begin{EQA}[c]
		\label{c.f. with shift}
		|f_{j}(t)| \le \frac{1}{(1+4\lambda_{j}^{2} t^{2})^{1/4}},
		\qquad 
		j = 1, \ldots, m. 
	\end{EQA}
	By the inverse formula 
	\begin{EQA}
		\label{step 3-5}
		p(x,\av) 
		&=& 
		\frac{1}{2\pi} \int_{-\infty}^{+\infty} e^{-itx}
		\prod_{j=1}^{m} {f}_{j}(t)\,dt
		\\
		& \leq &
		\frac{1}{2\pi} \int_{-\infty}^{+\infty} 
		\prod_{j=1}^{m} \bigl| f_{j}(t) \bigr|\,dt
		\leq 
		\frac{1}{2\pi} \int_{-\infty}^{+\infty} 
		\prod_{j=1}^{m} \frac{1}{(1+4\lambda_{j}^{2} t^{2})^{1/4}}\,dt .
	\end{EQA}
	Now Lemma~\ref{Lprodcharf} implies the desired bound.

	The proof in case 2 follows from the Lemma~\ref{l: density est} in Section~\ref{Non-unif_bound}. 
	However, as long as a uniform bound is concerned,
	one can simplify the proof. 
	Indeed, similarly to~\eqref{reduction} one can show that for \( m \geq 2 \)
	\begin{EQA}[c]
		\label{step 1-0}
		g(m,x) \le \sup_{x \geq 0} g(2,x).
	\end{EQA}
	It is straightforward to check that
	\begin{EQA}
		g_{j}(x) 
		&=& 
		\frac{1}{2\sqrt{2\pi x \lambda_{j}}}  
		\Bigl[  \exp\Bigl( - \frac{(x^{1/2} - a_{j})^{2}}{2\lambda_{j}} \Bigr) 
		+ \exp\Bigl( -\frac{(x^{1/2} + a_{j})^{2}}{2\lambda_{j}} \Bigr)
		\Bigr] 
		\le 
		\frac{1}{\sqrt{2\pi x \lambda_{j}}}.
		\qquad
		\label{p.d.f. g j}
	\end{EQA}
	This inequality implies that
	\begin{EQA}
		g(2,x) &=& \int_{0}^{x} g_{1}(x - y) g_{2}(y) \, dy  \le \frac{1}{2 \pi \sqrt{\lambda_{1} \lambda_{2}}}  \int_{0}^{x} (x - y)^{-1/2} y^{-1/2} \, dy =  \frac{1}{2 \sqrt{\lambda_{1} \lambda_{2}}}.
	\end{EQA}
	It remains to use the fact that the r.h.s. of the previous inequality can also be bounded by \( \CONST/ \sqrt{\Frobg_{1} \Frobg_{2}} \).

	Finally we consider the case 3.
	Define \( w_{j} \eqdef {\lambda_{j}^{2}}/\Frobg_{2}^{2} \) for \( j \geq 2 \) and rewrite \(\|\xiv\|^2\) as follows
	\begin{EQA}[c]
		\label{step 3-3}
		\|\xiv\|^2 \myeq \,(\xi_{1} - a_{1})^{2} + \Frobg_{2} \, \eta,
	\end{EQA} 
	where \(\eta \eqdef \sum_{j=2}^{m}\sqrt{w_{j}}\,(Z_{j} - a_{j}')^{2}\), 
	\( a_{j}'\eqdef a_{j}/\sqrt{\lambda_{j}}, Z_j \sim \ND(0,1) \). 
	Let \(p_{\eta} \) be the p.d.f. of random variable \(\eta\). 
	The bound \eqref{p.d.f. g j} implies
	\begin{EQA}[c] 
		\label{step 3-4}
		g(m, x) 
		\le \frac{1}{\sqrt{2\pi \lambda_{1}}}
		\int_{0}^{x/\Frobg_{2}}	\frac{p_{\eta}(z)}{\sqrt{x - \Frobg_{2}\,z}} \, dz 
		\leq 
		\frac{\CONST}{ \sqrt{\lambda_{1} \Frobg_{2}}}\, \sup_{x > 0}
		\int_{0}^{x} \frac{p_{\eta}(z)}{\sqrt{x - z}}\, dz.
	\end{EQA}
	Note that \(p_{\eta}(z)\) is bounded by some absolute constant. 
	Indeed, by the inverse formula 
	\begin{EQA}[c]
		\label{step 3-5}
		p_{\eta}(z) = \frac{1}{2\pi} \int_{-\infty}^{+\infty} e^{-itz}
		\prod_{j=2}^{m}\bar{f}_{j}(t)\,dt,
	\end{EQA}
	where \(\bar{f}_{j}(t)\) is the characteristic function of \(\sqrt{w_j}\,(Z_{j} - a_{j}')^{2}\) for \(j = 2,\dots , m \). 
	Similarly to~\eqref{c.f. with shift} we can bound 
	\(|\bar{f}_{j}(t)| \leq (1+4\,w_{j}\,t^{2} )^{-1/4} \)
	and 
	\begin{EQA}
		p_{\eta}(z)
		& \leq &
		\frac{1}{2\pi} \int_{-\infty}^{+\infty} 
		\prod_{j=2}^{m} \bigl| \bar{f}_{j}(t) \bigr|\,dt
		\leq 
		\frac{1}{2\pi} \int_{-\infty}^{+\infty} 
		\prod_{j=2}^{m} \frac{1}{(1+4\,w_{j}\,t^{2} )^{1/4}}\,dt \, .
		\label{peta12piii2p}
	\end{EQA} 
	In view of 
	\( \sum_{j\geq 2} w_{j} = 1 \), Lemma~\ref{Lprodcharf} implies 
	\begin{EQA}
		\sup_{z} p_{\eta}(z) 
		& \lesssim &
		1 .
		\label{supzpetax1}
	\end{EQA}
	Combining this bound with \eqref{p.d.f. g j} and \eqref{step 3-4} yields 
	the upper bound of order 
	\( \bigl( \lambda_{1} \Frobg_{2} \bigr)^{-1/2} \asymp \bigl( \Frobg_{1} \Frobg_{2} \bigr)^{-1/2} \) in case (3).
	This completes the proof of the theorem.
\end{proof}	

\begin{remark}
	We would like to remark that instead of Lemma~\ref{Lprodcharf} one may also apply an alternative approach from~\cite{Ulyanov1987}[Lemma 5].
\end{remark}

\begin{proof}[Proof of Theorems~\ref{l: explicit gaussian comparison}] We split the proof into two parts. In the first part we study the case \(\av = 0\). The second part is devoted to the case \(\Sigma_{\xiv} = \Sigma_{\etav}\). 
	The final estimate will follow by combining the two obtained estimates and the triangular inequality.

	\subsubsection*{Case I\,: \(\av = 0\).}  Without loss of generality we may assume that \(\Sigma_{\xiv} = \lambdav_{\xiv}, \Sigma_{\etav} = \lambdav_{\etav}\), where \(\lambdav_{\xiv} \eqdef \diag(\lambda_{1\xiv}, \lambda_{2\xiv}, \ldots),\, \lambdav_{\etav} \eqdef \diag(\lambda_{1\etav}, \lambda_{2\etav}, \ldots) \) and \(\lambda_{1\xiv} \geq \lambda_{1\xiv} \geq \ldots  \) and similarly in decreasing order for \(\lambda_{i\etav} \)'s.

	Fix any \(s: 0\leq s \leq 1\).  Let  \(Z(s)\) be a Gaussian random element in \(\HM\) with zero mean and diagonal covariance operator \(\V(s)\):
	\begin{EQA}[c]
		\V(s) \eqdef  s \lambdav_{\xiv} + (1-s) \lambdav_{\etav}.
	\end{EQA}
	Denote by \(f(t,s)\) (resp. \( p(x, s) \)) the characteristic function (resp. p.d.f.) of \(\|Z(s)\|^{2}\).  Let \(\lambda_{1}(s)\geq\lambda_{2}(s)\geq\dots\) be the eigenvalues of \(\V(s)\) and introduce the diagonal resolvent operator  \(\G(t,s) \eqdef (\Id - 2it\, \V(s))^{-1} \). Recall that \( \|Z(s)\|^{2} \myeq \sum_{j=1}^n \lambda_{j}(s) Z_{j}^{2}\), where \(Z_{j}, j \geq 1,\) are i.i.d. \(\ND(0,1)\) r.v. Then it is straightforward to check that a characteristic function \(f(t,s)\) of \(\|Z(s) \|^{2}\) can be written as
	\begin{EQA}
		\label{eq:  c.f.}
		f(t,s) &=& \E \exp\{it\|Z(s)\|^{2}\} \nonumber = \exp \bigg\{ - \frac{1}{2} \tr \log\bigl( \Id - 2 it \V(s) \bigr) \bigg\},
	\end{EQA}
	where for an operator \(\A\) and the identity operator \(\Id\) we use notation  
	\begin{EQA}[c]
		\log(\Id + \A) 
		= 
		\A \int_{0}^{1}(\Id + y \A)^{-1}dy.
	\end{EQA}
	It is well known, see e.g.~\cite{Chung2001}[\S 6.2, p. 168], that for a continues d.f. \(F(x)\) with c.f. \(f(t)\) we may write
	\begin{EQA}[c]
		F(x) 
		= 
		\frac{1}{2} + \frac{i}{2\pi} \lim_{T \rightarrow \infty} \text{V.P.} \int_{|t| \le T} e^{-i t x} f(t) \frac{dt}{t}.
	\end{EQA} 
	Let us fix an arbitrary \(x > 0\). Then 
	\begin{EQA}[c] 
		\P( \| \xiv\|^{2} < x) - \P( \| \etav \|^{2} < x) 
		=  
		\frac{i}{2\pi} \lim_{T \rightarrow \infty} \text{V.P.} 
		\int_{|t| \le T} \frac{f(t,1) - f(t,0)}{t} e^{-i t x} \,dt.
	\end{EQA}
	By the Newton-Leibnitz formula
	\begin{EQA}[c]
		f(t,1) - f(t,0)
		=
		\int_{0}^{1}\frac{\partial f(t,s)}{\partial s}\,ds.
	\end{EQA}
	It is straightforward  to check that
	\begin{EQA}[c]
		\frac{\partial f(t,s)/\partial s}{t} 
		= 
		i f(t,s) \tr\bigl\{ (\lambdav_{\xiv} - \lambdav_{\etav}) \G(t,s) \bigr\} .  
	\end{EQA}
	Changing the order of integration we get
	\begin{EQA}
		\P( \| \xiv \|^{2} < x) &-& \P( \| \etav  \|^{2} < x)
		\nonumber \\
		&=& -\frac{1}{2\pi} \int_{0}^{1}  \int_{-\infty}^{\infty} 
		\tr\left\{ (\lambdav_{\xiv} - \lambdav_{\etav}) \G(t, s) \right\}   
		f(t,s) e^{-itx}\, dt \, ds. 
		\label{eq: difference between measures}
	\end{EQA}
	Since \( \G(t,s) \) is the diagonal operator with \((1 - 2 i t \lambda_{j}(s))^{-1} \) on the  diagonal, we may fix \( s \) and \(j \) and consider the following quantity
	\begin{EQA}[c]
		\frac{1}{2\pi} \int_{-\infty}^{\infty} (1 - 2 i t \lambda_{j}(s))^{-1} f(t,s) e^{-itx} \, dt.
	\end{EQA} 
	Let \( \Zbar_{j}(s), j \geq 1 \) be independent exponentially distributed r.v. with parameter \(1/(2\lambda_{j}(s))\) (we write \(\Exp(2\lambda_{j}(s))\)), which  are also independent of \(Z_{k}, k \geq 1\). 
	Then
	\begin{EQA}[c]
		\label{c.f. Z}
		\E e^{i t \Zbar_{j}(s)} 
		= 
		(1 - 2 i t \lambda_{j}(s))^{-1}.
	\end{EQA}
	Moreover, \((1 - 2 i t \lambda_{j}(s))^{-1} f(t,s)\) is the characteristic function of \( \Zbar_{j}(s) + \| Z(s)\|^{2}\). 
	Let \( p_{j}(x,s) \) be the corresponding p.d.f.  
	Then
	\begin{EQA}[c]
		\frac{1}{2\pi} \int_{-\infty}^{\infty} (1 - 2 i t \lambda_{j}(s))^{-1} f(t,s) e^{-itx} \, dt 
		= 
		p_{j}(x,s).
	\end{EQA}
	Denote by \(  \OP(x,s)\) a diagonal operator with \( p_{j}(x,s) \) on the main diagonal. 
	Then we may conclude that
	\begin{EQA}[c]
		\frac{1}{2\pi} \int_{-\infty}^{\infty} 
		\tr\left\{ (\lambdav_{\xiv} - \lambdav_{\etav}) \G(t,s) \right\} f(t,s) e^{-itx} \, dt 
		= 
		\tr \left\{ (\lambdav_{\xiv} - \lambdav_{\etav})\OP(x,s) \right\}.
	\end{EQA}
	It is clear that the absolute value of the last term is bounded above by
	\begin{EQA}[c]
		\| \lambdav_{\xiv} - \lambdav_{\etav}\|_{1} \,  \max_{j} \sup_{x  \geq 0} p_{j}(x,s)
	\end{EQA}
	and we need to bound uniformly each \( p_{j}(x,s) \). For any \(j\):
	\begin{EQA}[c]
		p_{j}(x,s) 
		= 
		\int_{-\infty}^{\infty} p(y, s) \bar{p}_{j}(x-y,s)\, dy 
		\le 
		\sup_{x \geq 0} p(x,s), 
	\end{EQA}
	where \(\bar{p}_{j}(x,s) \) is the p.d.f. of \(\overline Z_{j}(s)\). 
	Applying Theorem~\ref{l: density est 2} we obtain
	\begin{EQA}[c]
		\sup_{x \geq 0} p(x,s) 
		\lesssim 
		\CONSTdlt(\Sigma(s)) ,
	\end{EQA}
	where \( \CONSTdlt(\Sigma(s)) \) is from \eqref{CdelSxiSeta}.
	It remains to integrate over \(s\) to obtain
	\begin{EQA}[c]
		\sup_{x > 0} \Bigl|\P( \| \xiv\|^{2} < x) - \P( \| \etav \|^{2} < x) \Bigr| 
		\le 
		\Bigl\{ \CONSTdlt(\Sigma_{\xiv}) + \CONSTdlt(\Sigma_{\etav}) \Bigr\}
		\bigl\| \lambdav_{\xiv} - \lambdav_{\etav} \bigr\|_{1}.
	\end{EQA}
	
	\subsubsection*{Case II\,: \(\Sigma_{\xiv} = \Sigma_{\etav}\) and \(\av \ne 0\).} 
	
	We may rotate \(\xiv\) such that \(\Sigma_{\xiv} = \Lambda_{\xiv}\). Then we have to replace \(\av\) by appropriate \( \avc \), but \( \| \av\| = \| \avc\| \). Fix any \(s: 0\leq s \leq 1\). Let \(  \avc(s) \eqdef \avc \sqrt s\). Introduce the diagonal operator \(\G(t) \eqdef (\Id - 2it\, \Lambda_{\xiv})^{-1} \). It is straightforward to check that a characteristic function \(f(t, \avc(s))\) of \(\| \xiv - \avc(s) \|^{2}\) can be written as
	\begin{EQA}
		f(t,\avc(s)) &=& \E \exp\{it\| \xiv - \avc(s) \|^{2}\} \nonumber \\
		&=& \exp \left\{ it\left(s\| \avc \|^{2}  + s \langle \G(t) \avc, \avc \rangle - \frac{1}{2it} \tr \log\bigl( \Id - 2 it \Lambda_{\xiv} \bigr) \right)\right\}.
	\end{EQA}
	Repeating the arguments from the proof of Theorem~\ref{l: explicit gaussian comparison} we obtain  (compare with~\eqref{eq: difference between measures})
	\begin{EQA}
		&& \nquad
		\P( \| \xiv - \av\|^{2} < x) - \P( \| \xiv \|^{2} < x) 
		\\
		&=& 
		-\frac{1}{2\pi}  
		\int_{0}^{1}  \int_{-\infty}^{\infty} \Bigl[\| \avc\|^{2} + \langle\G(t) \avc, \avc \rangle \Bigr] 
		f(t,\avc(s)) e^{-itx}\, dt \, ds.
	\end{EQA} 
	Moreover, we may rewrite the last equation as follows
	\begin{EQA}
		&& \nquad
		\P( \| \xiv - \av\|^{2} < x) - \P( \| \xiv \|^{2} < x) 
		\\
		&=& 
		- \| \av\|^{2} \int_{0}^1 p(x, \avc(s))\, ds 
		- \sum_{j=1}^{\infty} [\ac_{j}]^{2}  \int_{0}^1 p_{j}(x,\avc(s)) \, ds,  
	\end{EQA}
	where \( p(x, \avc(s)),\,  p_{j}(x,\avc(s)) \) are p.d.f of \(\| \xiv - \avc(s)\|^{2}\) and \( \overline Z_{j} + \| \xiv - \avc(s)\|^{2} \) resp. 
	Here \( \overline Z_{j} \) is  a random variable with exponential distribution \(\Exp(2\lambda_{j\xiv})\). 
	It remains to apply Theorem~\ref{l: density est 2} and integrate over \(s\).
\end{proof}

\section{Acknowledgements}

We would like to thank the Associate Editor and the  Reviewer for helpful
comments and suggestions.

F. G{\"o}tze was supported by the German Research Foundation (DFG) through the Collaborative Research Center 1283: ``Taming uncertainty and profiting from randomness and low regularity in analysis, stochastics and their applications''. A.~Naumov was supported RFBR N~16-31-00005 and President's of Russian Federation Grant for young scientists N~4596.2016.1. V. Spokoiny was supported by the Russian Science Foundation (project no. 14 50 00150).
Financial support by the German Research Foundation (DFG) through the Collaborative Research Center 1294 ``Data Assimilation -- The Seamless Integration of Data and Models'' is gratefully acknowledged.

%
%

\appendix
\section{Technical results}

\begin{lemma}
	\label{Lintdelttm1}
	It holds 
	\begin{EQA}
		\sup_{0 < a \leq 1} a	\int_{0}^{\infty}  
		\frac{1}{(1 + \tsqu^{2})^{a+1/2}} d \tsqu
		& \leq &
		\CONST ,
		\label{a012inft2atb12}
	\end{EQA}
	and 
	\begin{EQA}
		\sup_{a \geq 1} a^{1/2} \int_{0}^{\infty}  
		\frac{1}{(1 + \tsqu^{2})^{a+1/2}} d \tsqu
		& \leq &
		\CONST .
		\label{p320inft2atb12}
	\end{EQA}
\end{lemma}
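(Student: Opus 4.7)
My plan is to reduce the integral to a ratio of Gamma functions via the Beta identity, after which both bounds become statements about the behaviour of $\Gamma(a)/\Gamma(a+1/2)$. The substitution $u = \tsqu^{2}$ yields
\begin{EQA}
\int_{0}^{\infty} \frac{d \tsqu}{(1+\tsqu^{2})^{a+1/2}}
& = &
\frac{1}{2} \int_{0}^{\infty} \frac{u^{-1/2}}{(1+u)^{a+1/2}} \, d u
= \frac{1}{2}\, B\!\left(\tfrac{1}{2}, a\right)
= \frac{\sqrt{\pi}}{2} \cdot \frac{\Gamma(a)}{\Gamma(a+1/2)},
\end{EQA}
which is valid for every $a > 0$.

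The next step is to apply Gautschi's inequality $\Gamma(x+1)/\Gamma(x+1/2) \le \sqrt{x+1}$ for $x > 0$. Using $a\,\Gamma(a) = \Gamma(a+1)$ and multiplying the identity above by $a$ I obtain
\begin{EQA}
a \int_{0}^{\infty} \frac{d \tsqu}{(1+\tsqu^{2})^{a+1/2}}
& = &
\frac{\sqrt{\pi}}{2} \cdot \frac{\Gamma(a+1)}{\Gamma(a+1/2)}
\; \leq \;
\frac{\sqrt{\pi}}{2} \sqrt{a+1}.
\end{EQA}
On $0 < a \leq 1$ the right-hand side is at most $\sqrt{\pi/2}$, establishing \eqref{a012inft2atb12}. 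For \eqref{p320inft2atb12} I divide by $a^{1/2}$:
\begin{EQA}
a^{1/2} \int_{0}^{\infty} \frac{d \tsqu}{(1+\tsqu^{2})^{a+1/2}}
& \leq &
\frac{\sqrt{\pi}}{2} \sqrt{\frac{a+1}{a}}
\; \leq \;
\sqrt{\pi/2}, \qquad a \geq 1,
\end{EQA}
so both bounds hold with the same absolute constant.

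There is no genuine obstacle; the only methodological choice is whether to invoke Gautschi's inequality directly or to give a first-principles argument. If the latter were preferred, I would split the domain at $\tsqu = 1$. On $[0,1]$ the integrand is at most $1$, and on $[1,\infty)$ I would use $(1+\tsqu^{2})^{-(a+1/2)} \leq \tsqu^{-(2a+1)}$, which integrates to $1/(2a)$; this already yields \eqref{a012inft2atb12}. For \eqref{p320inft2atb12} the piece on $[0,1]$ needs the sharper estimate $(1+\tsqu^{2})^{-(a+1/2)} \leq \exp\!\bigl(-(\log 2)(a+1/2)\tsqu^{2}\bigr)$, obtained from $\log(1+x) \geq (\log 2)\, x$ on $[0,1]$, after which a Gaussian-tail computation provides the required $O(a^{-1/2})$ decay, and the tail integral is controlled as before.
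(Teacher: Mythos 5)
Your proof is correct, and it takes a genuinely different route from the paper. The paper proceeds by defining $\Inta(a)=\int_0^\infty(1+t^2)^{-(a+1/2)}dt$, deriving the recurrence $\Inta(a+1)=\frac{a}{a+1/2}\Inta(a)$ by integration by parts, and then using monotonicity plus a telescoping product to bound $a\Inta(a)$ on $(0,1]$ and $\sqrt{a}\,\Inta(a)$ on $[1,\infty)$. You instead recognize the integral as a Beta function, $\Inta(a)=\tfrac{1}{2}B(\tfrac12,a)=\tfrac{\sqrt\pi}{2}\Gamma(a)/\Gamma(a+\tfrac12)$, and then invoke Gautschi's inequality $\Gamma(x+1)/\Gamma(x+1/2)\le\sqrt{x+1}$ to conclude $a\Inta(a)\le\tfrac{\sqrt\pi}{2}\sqrt{a+1}$ for all $a>0$, from which both bounds drop out with the same explicit constant $\sqrt{\pi/2}$. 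The trade-off is that your argument is shorter and yields sharp explicit constants in one shot, at the cost of citing Gautschi's inequality; the paper's recurrence argument is longer but fully self-contained, using only integration by parts and elementary estimates. Your first-principles fallback (splitting the integral at $t=1$, using the tail bound $t^{-(2a+1)}$ for the first inequality and the pointwise bound $\log(1+x)\ge(\log 2)x$ on $[0,1]$ together with a Gaussian integral for the second) is also correct and is in the same elementary spirit as the paper's proof, though structured differently. Either of your routes is a valid replacement for the paper's argument.
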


\begin{proof}
	Define
	\begin{EQA}
		\Inta(a)
		& \eqdef &
		\int_{0}^{\infty} \frac{1}{(1 + \tsqu^{2})^{a+1/2}} \, d \tsqu .
		\label{La0inga0ap1}
	\end{EQA}
	Obviously, \( \Inta(a) \) monotonously decreases in \( a \).
	Integration by parts implies for \( a > 0 \)
	\begin{EQA}
		\int_{0}^{\infty} \frac{\tsqu^{2}}{(1 + \tsqu^{2})^{a+3/2}} \, d \tsqu
		&=&
		- \frac{1}{2a+1} 
		\int_{0}^{\infty} \tsqu \, d \left( \frac{1}{(1 + \tsqu^{2})^{a+1/2}} \right) 
		\\
		&=&
		\frac{1}{2a+1} \int_{0}^{\infty} \frac{1}{(1 + \tsqu^{2})^{a+1/2}} \, d \tsqu 
		=
		\frac{\Inta(a)} {2a+1} \, .
		\label{gai12afa2a}
	\end{EQA}
	At the same time, for \( a > 0 \)
	\begin{EQA}
		\int_{0}^{\infty} \frac{\tsqu^{2}}{(1 + \tsqu^{2})^{a+3/2}} \, d \tsqu
		&=&
		\int_{0}^{\infty} \frac{1 + \tsqu^{2}}{(1 + \tsqu^{2})^{a+3/2}} \, d \tsqu
		- \int_{0}^{\infty} \frac{1}{(1 + \tsqu^{2})^{a+3/2}} \, d \tsqu
		=
		\Inta(a) - \Inta(a+1) \, .
		\label{gafafam12a1}
	\end{EQA}
	This implies a recurrent relation
	\begin{EQA}
		\Inta(a+1)
		&=&
		\frac{a}{a+1/2} \Inta(a) \, .
		\label{fa2a2a1fam1}
	\end{EQA}
	For \( a \in [0,1] \), it implies
	\begin{EQA}
		a \Inta(a) 
		&=&
		(a+1/2) \Inta(a+1)
		\leq 
		\frac{3}{2} \, \Inta(1) 
		=
		\CONST 
		\label{aLaa12La1}
	\end{EQA}
	and \eqref{a012inft2atb12} follows. 
	For \( a = a_{0} + k \) with \( a_{0} \in \bigl[ 1,2 \bigr] \) and an integer \( k \geq 0 \), we use that
	\begin{EQA}
		\sqrt{a} \,\, \Inta(a)
		&=&
		\sqrt{a} \, \frac{(a-1)(a-2) \ldots a_{0}}{(a-1/2)(a-3/2) \ldots (a_{0}+1/2)} \Inta(a_{0})
		\\
		&=&
		\frac{\sqrt{a (a-1)}}{a-1/2} \frac{\sqrt{(a-1)(a-2)}}{a-3/2} \ldots 
		\frac{\sqrt{(a_{0}+1) a_{0}}}{a_{0}+1/2} \, \sqrt{a_{0}} \, \Inta(a_{0})
		\leq 
		\sqrt{2} \, \Inta(1)
		=
		\CONST .
		\label{sqaLasaam1am2L1}
	\end{EQA}
	This 
	proves \eqref{p320inft2atb12}.
\end{proof}

\begin{lemma}
	\label{Lprodcharf}
	Let \( \lambda_{1} \geq \lambda_{2} \geq \ldots \geq \lambda_{\dimp} \) and 
	\begin{EQA}
		3 \lambda_{1}^{2} 
		\leq 
		\Frobg^{2}
		& \eqdef &
		\sum_{j=1}^{\dimp} \lambda_{j}^{2} \, .
		\label{L12asj1plj2}
	\end{EQA}
	Define 
	\begin{EQA}
		h_{j}(t)
		& \eqdef &
		\frac{1}{(1 + \lambda_{j}^{2} t^{2})^{1/4}} \,\, ,
		\qquad
		j=1,\ldots,\dimp .
		\label{fjt11lj2t2}
	\end{EQA}
	Then it holds
	\begin{EQA}
		\int_{0}^{\infty} \prod_{j=1}^{\dimp} h_{j}(t) \, dt
		& \lesssim &
		\frac{1}{\Frobg} \, .
		\label{ioipj1fjC1}
	\end{EQA} 
\end{lemma}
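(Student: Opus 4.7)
The plan is to reduce the entire product to a single power of $(1+\lambda_1^2 t^2)$ whose exponent captures the ratio $\Frobg^2/\lambda_1^2$, then apply Lemma~\ref{Lintdelttm1} after rescaling. Specifically, I would establish the pointwise inequality
\begin{EQA}
	\prod_{j=1}^{\dimp} (1+\lambda_j^2 t^2)^{1/4}
	& \geq &
	(1+\lambda_1^2 t^2)^{\Frobg^2/(4\lambda_1^2)},
	\label{key_pointwise}
\end{EQA}
which, after the change of variable $s = \lambda_1 t$, yields
\begin{EQA}
	\int_0^\infty \prod_{j=1}^{\dimp} h_j(t)\, dt
	& \leq &
	\frac{1}{\lambda_1} \int_0^\infty (1+s^2)^{-\Frobg^2/(4\lambda_1^2)} \, ds.
\end{EQA}
Lemma~\ref{Lintdelttm1} applied with $a = \Frobg^2/(4\lambda_1^2) - 1/2$ will then finish the proof.

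The heart of the argument is the pointwise bound \eqref{key_pointwise}, which I would derive by a simple monotonicity. For any fixed $y > 0$, the map $x \mapsto \log(1+xy)$ is concave on $[0, \infty)$ and vanishes at $x=0$; consequently $x \mapsto \log(1+xy)/x$ is non-increasing on $(0, \infty)$. Applying this with $y = t^2$ and using $\lambda_j^2 \leq \lambda_1^2$ gives
\begin{EQA}
	\frac{\log(1+\lambda_j^2 t^2)}{\lambda_j^2}
	& \geq &
	\frac{\log(1+\lambda_1^2 t^2)}{\lambda_1^2}
	\qquad (j=1,\ldots,\dimp).
\end{EQA}
Summing over $j$ and using $\sum_j \lambda_j^2 = \Frobg^2$ yields $\sum_j \log(1+\lambda_j^2 t^2) \geq (\Frobg^2/\lambda_1^2) \log(1+\lambda_1^2 t^2)$, which exponentiates to \eqref{key_pointwise}.

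For the final step, set $\alpha \eqdef \Frobg^2/(4\lambda_1^2)$. The hypothesis $3\lambda_1^2 \leq \Frobg^2$ is exactly $\alpha \geq 3/4$, hence $a \eqdef \alpha - 1/2 \geq 1/4$. I would then split into two cases. If $\alpha \geq 3/2$ (so $a \geq 1$), the second bound in Lemma~\ref{Lintdelttm1} gives $\int_0^\infty (1+s^2)^{-a-1/2} ds \lesssim 1/\sqrt{a}$; combined with $a \geq \alpha/3$ (which is equivalent to $\alpha \geq 3/4$) and $\sqrt{\alpha} = \Frobg/(2\lambda_1)$, this produces $(1/\lambda_1)(1/\sqrt{a}) \lesssim 1/\Frobg$. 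If instead $3/4 \leq \alpha \leq 3/2$, then $\lambda_1 \asymp \Frobg$, the integral $\int_0^\infty (1+s^2)^{-\alpha}\, ds$ is uniformly bounded on this compact range of $\alpha$, and the prefactor $1/\lambda_1$ is again $\lesssim 1/\Frobg$.

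The only obstacle I foresee is bookkeeping: ensuring the constants line up in the two sub-cases of the final step, and checking that the constraint $a \geq 1/4$ is precisely what the hypothesis $3\lambda_1^2 \leq \Frobg^2$ delivers. The monotonicity lemma itself is clean, and the scaling reduction is routine, so the core analytical content is concentrated in the decreasing property $x \mapsto \log(1+xy)/x$ and in combining it with Lemma~\ref{Lintdelttm1}.
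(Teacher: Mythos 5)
Your proof is correct, and it takes a genuinely different route from the paper. The paper applies H\"older's inequality to the product $\prod_j h_j$ with carefully calibrated exponents $\qq_j$ (chosen so that $\lambda_j^2(\qq_j/4 - 1/2) = \tau$ for all $j$, where $\tau$ is fixed by $\sum_j \qq_j^{-1}=1$), reduces each $L^{\qq_j}$-norm to a scaled copy of $\Inta(\qq_j/4 - 1/2)$, and then tracks the constants to show $\tau \gtrsim \Frobg^2$. You instead establish the pointwise inequality $\prod_j (1+\lambda_j^2 t^2)^{1/4} \geq (1+\lambda_1^2 t^2)^{\Frobg^2/(4\lambda_1^2)}$, using the fact that $x \mapsto \log(1+xy)/x$ is non-increasing on $(0,\infty)$ (a direct consequence of concavity of $\log$), which collapses the entire product into a single power and leaves one change of variable followed by Lemma~\ref{Lintdelttm1}. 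Your two-case split at $\alpha = \Frobg^2/(4\lambda_1^2) \gtrless 3/2$ is necessary and correctly handled: when $\alpha$ is large you need the $a^{-1/2}$ decay from~\eqref{p320inft2atb12} to compensate for $1/\lambda_1$ being much larger than $1/\Frobg$, while when $\alpha \in [3/4,3/2]$ one has $\lambda_1 \asymp \Frobg$ and a uniform bound on $\Inta(a)$ suffices. Your approach buys a shorter and more transparent argument that avoids H\"older entirely and isolates the core analytic fact (the concavity/monotonicity lemma); the paper's H\"older route is more mechanical but would perhaps adapt more readily if the exponent $1/4$ in $h_j$ were replaced by a different power, since the calibration of the $\qq_j$'s would go through with minor changes while the pointwise bound is tied to summing logarithms.
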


\begin{proof}
	Let \( \qq_{j} \) be a set of positive numbers with \( \qq_{j} \geq 3 \) and 
	\( \sum_{j} \qq_{j}^{-1} = 1 \). 
	A specific choice will be given later.
	By the H\"older inequality
	\begin{EQA}
		\int_{0}^{\infty} \prod_{j=1}^{\dimp} h_{j}(t) \, dt
		& \leq &
		\prod_{j=1}^{\dimp} \left( \int_{0}^{\infty} |h_{j}(t)|^{\qq_{j}} \, dt \right)^{1/\qq_{j}} \, .
		\label{i0ipj1pfjpj1pj}
	\end{EQA}
	Further, for each \( j \), by the change of variable \( \lambda_{j} t = u \)
	\begin{EQA}
		\int_{0}^{\infty} |h_{j}(t)|^{\qq_{j}} \, dt
		&=&
		\int_{0}^{\infty} \frac{dt}{(1 + \lambda_{j}^{2} t^{2})^{\qq_{j}/4}}
		=
		\lambda_{j}^{-1} \int_{0}^{\infty} \frac{du}{(1 + u^{2})^{\qq_{j}/4}}
		=
		\lambda_{j}^{-1} \Inta(\qq_{j}/4 - 1/2)
		\label{ljm1Lpj4m1}
	\end{EQA}
	with \( \Inta(\cdot) \) from \eqref{La0inga0ap1}.
	Therefore, by \eqref{p320inft2atb12} of Lemma~\ref{Lintdelttm1} in view of \( \qq_{j}/4 - 1/2 \geq 1/4 \)
	\begin{EQA}
		\int_{0}^{\infty} \prod_{j=1}^{\dimp} h_{j}(t) \, dt
		& \le &
		\prod_{j=1}^{\dimp} \left( \frac{1}{\lambda_{j} \Inta(\qq_{j}/4 - 1/2)} \right)^{1/\qq_{j}}
		\lesssim 
		\prod_{j=1}^{\dimp} \left( \frac{1}{\lambda_{j} \sqrt{\qq_{j}/4 - 1/2}} \right)^{1/\qq_{j}} .
		\qquad
		\label{int0ipj1pj1p1lj}
	\end{EQA}
	Now we fix \( \qq_{j} \) by the condition
	\begin{EQA}
		\lambda_{j}^{2} (\qq_{j}/4 - 1/2)
		&=&
		\tau ,
		\label{C2ljpj4m1}
	\end{EQA}
	where the constant \( \tau \) is determined by \( \sum_{j=1}^{\dimp} \qq_{j}^{-1} = 1 \).
	This yields
	\begin{EQA}[rclcl]
		\frac{1}{\qq_{j}}
		&=&
		\frac{\lambda_{j}^{2}}{4 \tau + 2 \lambda_{j}^{2}} \, ,
		\qquad
		\sum_{j=1}^{\dimp} \frac{\lambda_{j}^{2}}{4 \tau + 2 \lambda_{j}^{2}}
		&=&
		1, 
		\label{1pj1lj24C2lj2}
	\end{EQA}
	and obviously \( \tau \leq \Frobg^{2}/4 \) and 
	\( \tau + \lambda_{1}^{2}/2 \geq \Frobg^{2}/4 \).
	The condition \( 3 \lambda_{1}^{2} \leq \Frobg^{2} \) 
	implies  
	\begin{EQA}
		\qq_{j} 
		& = &
		\frac{4 \tau}{\lambda_{j}^{2}} + 2
		\geq 
		\frac{\Frobg^{2} - 2 \lambda_{1}^{2}}{\lambda_{1}^{2}} + 2
		\geq 3 ,
		\qquad 
		j \leq \dimp.
		\label{pj4C2lj224}
	\end{EQA}
	Also
	\begin{EQA}
		\tau
		& \geq &
		\frac{1}{4} \bigl( \Frobg^{2} - 2 \lambda_{1}^{2} \bigr)
		\geq 
		\frac{1}{4} \Bigl( \Frobg^{2} - \frac{2 \Frobg^{2}}{3} \Bigr)
		\gtrsim
		\Frobg^{2} \, .
		\label{C14F2m2l1dF2}
	\end{EQA}
	Now it follows from \eqref{int0ipj1pj1p1lj} that
	\begin{EQA}
		\int_{0}^{\infty} \prod_{j=1}^{\dimp} h_{j}(t) \, dt
		& \lesssim &
		\biggl( \frac{1}{\sqrt{\tau}} \biggr)^{\qq_{1}^{-1}+ \ldots + \qq_{\dimp}^{-1}}
		\lesssim 
		\frac{1}{\Frobg}
		\label{ipj1pfjCm1}
	\end{EQA}
	as required.
\end{proof}

\section{A non-uniform bound for the density of a weighted non-central  \( \chi^{2}\) distribution} 
\label{Non-unif_bound}

\begin{lemma}\label{l: density est}
	Let \(\xiv\) be a Gaussian element in \(\HM\) with zero mean and covariance operator \(\Sigma_{\xiv}\). For any \(\av\in \HM \) and all \(\lambda>\lambda_{1\xiv}\)
	\begin{EQA}[c]
		\label{eq: density est}
		p_{\xiv}(x,\av) \leq \frac{ \exp \bigl(-(x^{1/2} - \| \av\|)^{2}/(2\,\lambda)\bigr)}{\sqrt{2\lambda_{1\xiv}\lambda_{2\xiv}}} \prod_{j=3}^{\infty}\,(1-\lambda_{j\xiv}/\lambda)^{-1/2}. 
	\end{EQA}
\end{lemma}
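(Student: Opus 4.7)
The plan is to factor the Gaussian density of $\xi - a$ into an explicit $e^{-\|\cdot\|^{2}/(2\lambda)}$ sub-Gaussian tail times a ``reshaped'' centered Gaussian density, and then apply the two-fold convolution bound from case~2 of the proof of Theorem~\ref{l: density est 2} to control the reshaped piece. First I would reduce to the finite-dimensional diagonal setup exactly as in the proof of Theorem~\ref{l: density est 2}: choose an eigenbasis $\{\ee_j\}$ of $\Sigma_{\xiv}$, set $\lambda_j=\lambda_{j\xiv}$, $a_j=\langle a,\ee_j\rangle$ and $\xi_j=\langle \xiv,\ee_j\rangle\sim\ND(0,\lambda_j)$ independent; by the same convolution argument as in \eqref{reduction} it suffices to prove the bound for $\HM=\R^{m}$ and then pass to $m\to\infty$.

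Next I would express the density of $Y=\|\xiv-a\|^{2}$ as a surface integral,
\[
p_{\xiv}(x,a) \;=\; \frac{1}{2\sqrt{x}}\int_{\|w\|=\sqrt{x}}\phi_{\xiv}(w+a)\,dS(w),
\]
obtained from $\P(Y\leq x)=\int_{\|w\|\leq\sqrt{x}}\phi_{\xiv}(w+a)\,dw$ by differentiation via the coarea formula, where $\phi_{\xiv}$ denotes the centered Gaussian density with covariance $\diag(\lambda_j)$. The algebraic core of the argument is the identity
\[
\frac{1}{2\lambda_j} \;=\; \frac{1}{2\lambda}+\frac{1}{2\mu_j},\qquad \mu_j \eqdef \frac{\lambda\lambda_j}{\lambda-\lambda_j},
\]
valid since $\lambda>\lambda_{1\xiv}\geq\lambda_j$. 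Applying it in every factor of the product defining $\phi_{\xiv}(w+a)$ and renormalizing the Gaussian factors produces
\[
\phi_{\xiv}(w+a) \;=\; e^{-\|w+a\|^{2}/(2\lambda)}\,\prod_{j\geq 1}(1-\lambda_j/\lambda)^{-1/2}\,\phi_{\etav}(w+a),
\]
where $\phi_{\etav}$ is the density of a centered Gaussian $\etav$ with diagonal covariance $\diag(\mu_j)$. The reverse triangle inequality $\|w+a\|^{2}\geq(\sqrt{x}-\|a\|)^{2}$ on the sphere $\|w\|=\sqrt{x}$ then allows one to pull the uniform factor $e^{-(\sqrt{x}-\|a\|)^{2}/(2\lambda)}$ out of the surface integral, leaving
\[
p_{\xiv}(x,a) \;\leq\; e^{-(\sqrt{x}-\|a\|)^{2}/(2\lambda)}\,\prod_{j\geq 1}(1-\lambda_j/\lambda)^{-1/2}\,p_{\etav}(x,a),
\]
where $p_{\etav}(x,a)$ is the density of $\|\etav-a\|^{2}$.

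The final step is to bound $p_{\etav}(x,a)$ uniformly. Since each $\mu_j>0$, the argument of case~2 in the proof of Theorem~\ref{l: density est 2} applies verbatim: the estimate $g_j(u)\leq(2\pi\mu_j u)^{-1/2}$ from \eqref{p.d.f. g j} (with $\lambda_j$ replaced by $\mu_j$) together with the Beta identity $\int_{0}^{u}(u-v)^{-1/2}v^{-1/2}dv=\pi$ bounds the density of $(\etav_1-a_1)^{2}+(\etav_2-a_2)^{2}$ by $(2\sqrt{\mu_1\mu_2})^{-1}$, and convolving with the density of $\sum_{j\geq 3}(\etav_j-a_j)^{2}$ preserves this bound. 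Substituting $\mu_j=\lambda\lambda_j/(\lambda-\lambda_j)$, the $j=1,2$ contributions of $\prod_{j\geq 1}(1-\lambda_j/\lambda)^{-1/2}$ cancel exactly against $1/\sqrt{\mu_1\mu_2}$, yielding
\[
p_{\xiv}(x,a) \;\leq\; \frac{e^{-(\sqrt{x}-\|a\|)^{2}/(2\lambda)}}{2\sqrt{\lambda_{1\xiv}\lambda_{2\xiv}}}\,\prod_{j\geq 3}(1-\lambda_{j\xiv}/\lambda)^{-1/2},
\]
which is stronger than the claimed bound (the constant $\tfrac{1}{2}$ is smaller than $\tfrac{1}{\sqrt{2}}$). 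The main technical hurdle is spotting the factorization via $\mu_j=\lambda\lambda_j/(\lambda-\lambda_j)$; once that is in place the remainder reuses tools already developed for Theorem~\ref{l: density est 2} and the standard surface-integral representation of the density.
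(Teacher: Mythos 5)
Your proof is correct and rests on exactly the same algebraic device as the paper — the partial-fraction split $\lambda_j^{-1} = \lambda^{-1} + \mu_j^{-1}$ with $\mu_j = \lambda\lambda_j/(\lambda-\lambda_j)$ — but you deploy it at a different level, which yields a genuinely different proof architecture. The paper applies the split coordinate-by-coordinate to the one-dimensional densities $g_j$ of $(\xi_j - a_j)^2$, writing $g_j(x) \le (2\pi x\lambda_j)^{-1/2}\exp\bigl(-(\sqrt{x}-|a_j|)^2/(2\lambda)\bigr)\,d_j(\mu_j, x)$; it aggregates the shifts $|a_j|$ into $\|\av\|$ via the two-term inequality \eqref{inequality 1} (which is nothing but the reverse triangle inequality in $\R^2$) and builds up $g(m,x)$ by induction on $m$. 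You instead apply the split once, to the full $m$-variate Gaussian density $\phi_{\xiv}$, pull out the sub-Gaussian envelope $e^{-\|w+\av\|^2/(2\lambda)}$ via the reverse triangle inequality on the sphere $\|w\| = \sqrt{x}$ — aggregating all $a_j$ in one stroke — and then only need the uniform two-fold convolution bound (case 2 of Theorem~\ref{l: density est 2}) for the reshaped density $p_{\etav}(x,\av)$. Your observation that the $j = 1, 2$ factors of $\prod_{j}(1-\lambda_j/\lambda)^{-1/2}$ cancel against $1/\sqrt{\mu_1\mu_2}$ is correct, and the resulting constant $\frac{1}{2}$ is in fact what the paper's inductive bound \eqref{density for fixed m} produces too; the stated $1/\sqrt{2}$ is simply looser. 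Trading the induction and the aggregation inequality for the coarea/surface-integral representation is a fair exchange and arguably cleaner.

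One step is underdeveloped: the reduction to finite dimension. You invoke "the same convolution argument as in \eqref{reduction}", but \eqref{reduction} gives a \emph{uniform-in-$x$} bound $p_{\xiv}(x,\av)\le\sup_y g(m,y)$, which is enough for Theorem~\ref{l: density est 2} precisely because its right-hand side is $x$-free. Here the target carries $\exp(-(\sqrt{x}-\|\av\|)^2/(2\lambda))$, which is not monotone in $x$, so the plain convolution $p_{\xiv}(x,\av)=\int g(m,y)\bar g(m,x-y)\,dy$ does not let you evaluate the envelope at $y=x$. This is exactly why the paper spends half the proof on an $\varepsilon$-truncation of the tail $R_m \eqdef \sum_{j>m}(\xi_j-a_j)^2$ and a band argument around $x$. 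In your framework the cleanest repair is to write $p_{\xiv}(x,\av) = \E\bigl[g(m, x-R_m)\,\mathbf 1\{R_m\le x\}\bigr]$, insert the finite-$m$ estimate, and apply bounded convergence as $m\to\infty$, using $R_m\to 0$ a.s., $(\sum_{j\le m}a_j^2)^{1/2}\uparrow\|\av\|$, and the monotone convergence of the finite product to $\prod_{j\ge 3}(1-\lambda_{j\xiv}/\lambda)^{-1/2}$. You should also note that the Lebesgue-density / surface-integral step requires $\Sigma_{\xiv}$ to be nondegenerate on $\R^m$; any zero eigenvalues beyond the second should be pushed into the tail $R_m$ before invoking it.
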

\begin{remark}
	The infinite product in the r.h.s. of~\eqref{eq: density est} is convergent. Indeed, taking logarithm and using \(\log (1+x) \geq x/(x+1)\) for \(x >-1\) we obtain
	\begin{EQA}[c]
		0 < -\frac{1}{2}\log \prod_{j=3}^{\infty}\,(1-\lambda_{j\xiv}/\lambda) \le \frac{1}{2(\lambda-\lambda_{1\xiv})}\sum_{j=3}^\infty \lambda_{j\xiv} < \infty,
	\end{EQA} 
	where we also used the fact that \(\Sigma_{\xiv}\) is a nuclear and \( \| \Sigma_{\xiv} \|_{1} < \infty \). Taking \(\lambda = \|\Sigma_{\xiv}\|_1\) we get \(\prod_{j=3}^{\infty}\,(1-\lambda_{j\xiv}/\lambda)^{-1/2} \le \sqrt{e}.\)
\end{remark}
\begin{proof}
	We will use the notation from the proof of Theorem~\ref{l: density est 2}. We rewrite \(g_{j}(x)\) as follows
	\begin{EQA}[c]
		g_{j}(x) = \frac{1}{\sqrt{2\pi x \lambda_{j}}} d_{j}(x),
	\end{EQA}
	where
	\begin{EQA}[c]
		d_{j}(x) \eqdef d_j(\lambda_{j}, x) \eqdef \frac{1}{2} \Bigl[  \exp\bigl(-(x^{1/2} - a_{j})^{2}/(2\lambda_{j})\bigr) + \exp\bigl(-(x^{1/2} + a_{j})^{2}/(2\lambda_{j})\bigr) \Bigr].
	\end{EQA}
	It is straightforward to check that for \(a \geq b \geq 0\)
	\begin{EQA}[c]
		\label{inequality 1}
		((a - b)^{1/2} - c)^{2} + (b^{1/2} - d)^{2}  \geq (a^{1/2} - (c^{2} + d^{2})^{1/2})^{2},
	\end{EQA}
	and
	\begin{EQA}[c]
		\label{dj bound}
		d_{j}(x) \le \exp\bigl(-(x^{1/2} - |a_{j}|)^{2}/(2\lambda_{j})\bigr).
	\end{EQA}
	We have for all \(j=1,2,\dots\) and any \(\lambda > \lambda_{1}\) 
	\begin{EQA}[c]
		\label{eq: g_{j} bound}
		g_{j}(x)
		\le \frac{1}{\sqrt{2\pi x \lambda_{j}}} \exp\bigl(-(x^{1/2} - |a_{j}|)^{2}/(2\lambda)\bigr) d_j(\lambda\lambda_{j}/(\lambda - \lambda_{j}), x).
	\end{EQA}
	Moreover,
	\begin{EQA}[c]
		\label{eq: density}
		(2\pi x)^{-1/2}(\lambda - \lambda_{j})^{1/2}/(\lambda\lambda_{j})^{1/2}\,d_j(\lambda\lambda_{j}/(\lambda - \lambda_{j}), x)
	\end{EQA}
	is the density function of  \( \big(\sqrt{\lambda/(\lambda-\lambda_{j})}\,\xi_{j} - a_{j}\big)^{2} \). These inequalities imply
	\begin{EQA}
		g(2,x) &=& \int_{0}^{x} g_{1}(x - y) g_{2}(y) \, dy \\ &\le& \frac{1}{2 \pi \sqrt{\lambda_{1} \lambda_{2}}} \exp \bigl( -(x^{1/2} - (a_{1}^{2} + a_{2}^{2})^{1/2})^{2} /(2 \lambda) \bigr) \int_{0}^{x} (x - y)^{-1/2} y^{-1/2} \, dy \\
		& = & \frac{1}{2 \sqrt{\lambda_{1} \lambda_{2}}} \exp \bigl( -(x^{1/2} - (a_{1}^{2} + a_{2}^{2})^{1/2})^{2} /(2 \lambda) \bigr).
	\end{EQA}
	Similarly, applying the last inequality,~\eqref{eq: g_{j} bound} and~\eqref{eq: density} we obtain
	\begin{EQA}
		g(3,x) &=& \int_{0}^{x} g(2,x - y) g_{3}(y) \, dy \\ &\le& \frac{1}{2 \sqrt{\lambda_{1} \lambda_{2}} \sqrt{2 \pi \lambda_{3} }} \exp \bigl( -(x^{1/2} - (a_{1}^{2} + a_{2}^{2} + a_{3}^{2})^{1/2})^{2} /(2 \lambda) \bigr) \\
		&\times& \int_{0}^{x} \frac{d_j(\lambda\lambda_{3}/(\lambda - \lambda_{3}), y)}{y^{1/2}} \, dy \\
		& \le & \frac{1}{2 \sqrt{\lambda_{1} \lambda_{2}}} \exp \bigl( -(x^{1/2} - (a_{1}^{2} + a_{2}^{2} + a_{3}^{2})^{1/2})^{2} /(2 \lambda) \bigr)  \left(1 - \frac{\lambda_{3}}{\lambda}\right)^{-1/2}.
	\end{EQA}
	By induction we get
	\begin{EQA}[c]
		\label{density for fixed m}
		g(m,x) \le \frac{1}{2 \sqrt{\lambda_{1} \lambda_{2}}} \exp \Bigl( -\frac{(x^{1/2} - (a_{1}^{2} + \ldots + a_m^{2})^{1/2})^{2}}{2 \lambda} \Bigr) \prod_{j=3}^{m} \left(1 - \frac{\lambda_{j}}{\lambda}\right)^{-1/2}.
	\end{EQA}
	Now take an arbitrary \(\varepsilon > 0\) and any integer \(m>0\). Let \(0 < \mu < 1/(2\lambda_{j})\) for all \(j \geq m+1\). Without loss of generality we assume that at least two \(\lambda_{j}, j \geq m+1\), are non-zero. Otherwise the arguments are simpler. By Markov's inequality  we obtain
	\begin{EQA}[c]
		\P\left(\sum_{j=m+1}^{\infty}  \xi_{j}^{2}\ge \varepsilon^{2}\right)
		\le e^{-\mu \varepsilon^{2}} \prod_{j=m+1}^{\infty} \E e^{\mu  \xi_{j}^{2}} = e^{-\mu \varepsilon^{2}} \prod_{j=m+1}^{\infty} \frac{1}{\sqrt{1-2\mu \lambda_{j}}}.
	\end{EQA}
	Choosing \(\mu \eqdef 1/(2\sum_{j=m+1}^{\infty} \lambda_{j})\) we get
	\begin{EQA}[c]
		\P\left(\sum_{j=m+1}^{\infty} \xi_{j}^{2}\ge \varepsilon^{2}\right)
		\le 2\exp\left\{-\varepsilon^{2} \left(2 \sum_{j=m+1}^{\infty}\lambda_{j}\right)^{-1} \right\}.
	\end{EQA}
	Hence, there exists \(M_{1} = M_{1}(\varepsilon)\) such that for all \(m\ge M\)
	\begin{EQA}[c]
		\P\left(\sum_{j=m+1}^{\infty} \xi_{j}^{2}\ge \varepsilon^{2}\right)\leq \varepsilon^{2}.
	\end{EQA}
	For any \( m \geq 1\) we obtain
	\begin{EQA}[c]
		\sum_{j=m+1}^{\infty} (\xi_{j} - a_{j})^{2} \le 2 \left( \sum_{j=m+1}^{\infty} \xi_{j}^{2} + \sum_{j=m+1}^{\infty} a_{j}^{2} \right).
	\end{EQA}
	We choose \( M_{2} = M_{2} (\varepsilon)\) such that \(\sum_{j=m+1}^{\infty} a_{j}^{2} \le \varepsilon^{2}\). Hence, for \( M = M_{1} + M_{2} \) we obtain the following inequality
	\begin{EQA}[c]
		\P ( x - \varepsilon \le \| \xiv - \av\|^{2} \le x + \varepsilon)  \le \P \left( x - \varepsilon - 4 \varepsilon^{2} \le \sum_{j=1}^{m} (\xi_{j} - a_{j})^{2} \le x + \varepsilon \right ) + \varepsilon^{2}. 
	\end{EQA}
	The last inequality implies 
	\begin{EQA}[c]
		\P ( x - \varepsilon \le \| \xiv - a\|^{2} \le x + \varepsilon) \le \varepsilon^{2} + (2 \varepsilon + 4\varepsilon ^{2} ) \sup_{y\in T(\varepsilon, x)} g(m, y), 
	\end{EQA}
	where \(T(\varepsilon, x) \eqdef \{y \in \mathbb{R}^{1}: x - \varepsilon -4\varepsilon^{2} \leq y \leq x + \varepsilon\}\). 
	Dividing the right-hand side of the previous inequality by \(\varepsilon\) we obtain \eqref{eq: density est} from \eqref{density for fixed m} as \(\varepsilon\) tends to 0.	
\end{proof}

\def\polhk#1{\setbox0=\hbox{#1}{\ooalign{\hidewidth
			\lower1.5ex\hbox{`}\hidewidth\crcr\unhbox0}}}

%
%
%
%
%

\end{document}